\theoremstyle{plain}
\newtheorem{thm}{Theorem}[section]
\newtheorem{cor}[thm]{Corollary}
\newtheorem{theorem}[thm]{Theorem}
\newtheorem{proposition}[thm]{Proposition}
\newtheorem{lemma}[thm]{Lemma}
\newtheorem{corollary}[thm]{Corollary}
\theoremstyle{definition}
\newtheorem{defn}[thm]{Definition}
\newcommand{\bB}{{\mathbb{B}}}
\newcommand{\bC}{{\mathbb{C}}}
\newcommand{\bD}{{\mathbb{D}}}
\newcommand{\bN}{{\mathbb{N}}}
\newcommand{\bS}{{\mathbb{S}}}
\newcommand{\bT}{{\mathbb{T}}}
  \newcommand{\A}{{\mathcal{A}}}
\renewcommand{\H}{{\mathcal{H}}}
  \newcommand{\J}{{\mathcal{J}}}
  \newcommand{\K}{{\mathcal{K}}}  
  \newcommand{\M}{{\mathcal{M}}}
  \newcommand{\N}{{\mathcal{N}}}
  \newcommand{\U}{{\mathcal{U}}}
\newcommand{\fM}{{\mathfrak{M}}}
\newcommand{\fW}{{\mathfrak{W}}}
\renewcommand{\phi}{\varphi}
\newcommand{\upchi}{{\raise.35ex\hbox{$\chi$}}}
\newcommand{\ol}{\overline}
\newcommand{\qand}{\quad\text{and}\quad}
\newcommand{\qfor}{\quad\text{for}\quad}
\newcommand{\qforal}{\quad\text{for all}\quad}
\begin{document}
\title[Absolute continuity for commuting row contractions]{Absolute continuity for commuting row contractions}

\author[R. Clou\^atre]{Rapha\"el Clou\^atre}
\address{Department of Mathematics, University of Manitoba, 186 Dysart Road,
Winnipeg, Manitoba, Canada R3T 2N2}
\email{raphael.clouatre@umanitoba.ca\vspace{-2ex}}
\thanks{The first author was partially supported by an FQRNT postdoctoral fellowship and a start-up grant from the University of Manitoba.}

\author[K.R. Davidson]{Kenneth R. Davidson}
\address{Department of Pure Mathematics, University of Waterloo, 200 University Avenue West, 
Waterloo, ON, Canada N2L 3G1}
\email{krdavids@uwaterloo.ca}
\thanks{The second author is partially supported by an NSERC grant.}

\begin{abstract}
Absolutely continuous commuting row contractions admit a weak-$*$ continuous functional calculus. Building on recent work describing the first and second dual spaces of the closure of the polynomial multipliers on the Drury-Arveson space, we give a complete characterization of these commuting row contractions in measure theoretic terms.  We also establish that completely non-unitary row contractions are necessarily absolutely continuous, in direct parallel with the case of a single contraction. Finally, we consider refinements of this question for row contractions that are annihilated by a given ideal.
\end{abstract}

\keywords{dilation, commuting row contractions, absolutely continuous row contractions, Henkin measures, constrained row contractions}
\maketitle

\section{Introduction} \label{S:intro}
A single contraction $T$ acting on a Hilbert space $\H$ can be successfully analyzed by using the theory developed by Sz.-Nagy and Foias in their seminal work  \cite{NagyFoias}. Briefly, one splits the contraction into a direct sum $T=T_{\text{cnu}}\oplus U$ where $U$ is a unitary operator and $T_{\text{cnu}}$ is \emph{completely non-unitary} in the sense that it has no closed invariant subspace on which it restricts to be a unitary operator. The study of $T$ then reduces to the separate examination of the two pieces $T_{\text{cnu}}$ and $U$. The unitary part $U$ is rather well understood by virtue of the classical spectral theorem, and thus one is left with the task of understanding the completely non-unitary part. Fortunately, the geometric structure of the minimal unitary dilation of completely non-unitary contractions is especially transparent (Proposition II.1.4 in \cite{NagyFoias}). It can be deduced thereof that  the polynomial functional calculus associated to a completely non-unitary contraction $T$ extends to a unital algebra homomorphism
\[
\Phi_T:H^\infty(\bD)\to B(\H)
\]
which is completely contractive and weak-$*$ continuous. This is the celebrated Sz.-Nagy--Foias $H^\infty(\bD)$ functional calculus. In fact, this functional calculus exists for every contraction whose minimal unitary dilation has a spectral measure which is absolutely continuous with respect to Lebesgue measure on the circle. This last observation rests on the complete characterization of the so-called Henkin measures on the circle (\cite{ColeRange}, \cite{Henkin}, \cite{Rudin}). The existence of the map $\Phi_T$ has had a tremendous impact on single operator theory, making significant appearances in great advances in the invariant subspace problem \cite{BCP} and in the classification of $C_0$ contractions \cite{bercovici} for instance; and it remains of fundamental importance in current work. In particular, the functional calculus creates a bridge between single operator theory and function theory in the classical Hardy space $H^2(\bD)$.

A topic of modern interest is the simultaneous study of several operators, or multivariate operator theory. One particular aspect of it is concerned with commuting operators $T_1,\ldots,T_d$ acting on the same Hilbert space $\H$ with the property that the row operator $T=(T_1,\ldots,T_d)$, which maps the space $\H^{(d)}=\H \oplus \H\oplus \ldots \oplus \H$ into $\H$ in the natural way, is contractive. We then say that $T=(T_1,\ldots,T_d)$ is a \emph{commuting row contraction}. These have been the target of intense research in recent years, spurred on by the work of Arveson \cite{Arv98} who showed that commuting row contractions are deeply linked with a space of holomorphic functions on the open unit ball, now called the \emph{Drury-Arveson space}. In fact, there always exists a contractive functional calculus
\[
\Phi_T:\A_d\to B(\H)
\]
where $\A_d$ is the closure of the polynomials in the multiplier norm, a norm which dominates the usual supremum norm over the ball.
The resulting analogy with the single operator correspondence between contractions and function theory on the disc has proved quite fruitful, and this paper is a contribution to this program. 

More precisely, we are interested in the natural functional calculus $\Phi_T$ associated to a commuting row contraction $T$. We wish to determine when it extends weak-$*$ continuously to the appropriate version of the algebra $H^\infty(\bD)$ in this context, namely the full multiplier algebra $\M_d$. This problem was  investigated by Eschmeier \cite{eschmeier97} for the more restrictive class of commuting row contractions which satisfy von Neumann's inequality for the supremum norm over the ball. In that paper, the multiplier algebras $\A_d$ and $\M_d$ were replaced by the more familiar objects $A(\bB_d)$ and $H^\infty(\bB_d)$.  A measure theoretic analysis can then be performed, mirroring the classical situation of a single contraction. In the general situation that we are interested in however, the issues are more delicate and are not merely of a measure theoretic nature. Much of our analysis is based on our recent work \cite{CD} which partly elucidates these subtleties.

The paper is organized as follows. Section \ref{S:prelim} introduces the necessary background and preliminary results. Section \ref{S:abscont} is concerned with the topic of absolute continuity for commuting row contractions. We give a complete characterization of that property in terms of the spectral measure of a spherical unitary (Lemma \ref{L:spmeasure} and Theorem \ref{T:abscontdilation}). The statement is an exact analogue of the classical result which applies to single contractions, but the proof is drastically different. We also give a decomposition of a commuting row contraction into an absolutely continuous part and a ``singular" part (Theorem \ref{T:AS}). In Section 4, we examine the property of being completely non-unitary and show that it is sufficient for absolute continuity (Theorem \ref{T:abscontCNU}). Along the way, we obtain multivariate analogues of single operator results which do not seem to have been noticed before (Theorem \ref{T:Hudilation}). Finally, in Section \ref{S:constrained} we consider commuting row contractions for which the functional calculus has a non-trivial kernel. We obtain a refined version of Theorem \ref{T:AS} (Theorem \ref{T:constraineddilation}) which takes that extra piece of information into account. We also show  that the size of the common zero set of the functions which annihilate $T$ can be used to detect purity (Theorem \ref{T:constrainedpure}), an important dilation theoretic rigidity property that is stronger than absolute continuity.

\section{Preliminaries and background}\label{S:prelim}
\subsection{Dilation of commuting row contractions and the Drury-Arveson space}\label{SS:dilation}
Let $\H$ be a Hilbert space and let $T_1,\ldots,T_d \in B(\H)$ be commuting bounded linear operators. The operator $T=(T_1,\ldots,T_d)$ acts on $\H^{(d)}=\H\oplus \cdots\oplus \H$ in the following natural way: if $\xi=(\xi_1,\ldots,\xi_d)\in \H^{(d)}$, then
\[
T\xi=\sum_{k=1}^d T_k \xi_k.
\]
Then, $T$ is called a \emph{commuting row contraction} if it is contractive. Equivalently, we require that
\[
\sum_{k=1}^d T_k T_k^*\leq I .
\]
Just as the study of single contractions is interwoven with complex function theory on the open unit disc $\bD\subset \bC$ through the existence of an isometric co-extension (\cite[Theorem~I.4.1]{NagyFoias}), the study of commuting row contractions is related to complex function theory on the open unit ball $\bB_d\subset \bC^d$ by means of a co-extension theorem. Before stating it, we introduce the appropriate space of functions.

The \emph{Drury-Arveson space}, denoted by $H^2_d$, is the reproducing kernel Hilbert space on $\bB_d$ with kernel function given by 
\[
k(z,w)=\frac{1}{1-\langle z,w\rangle_{\bC^d}} \qfor z,w\in \bB_d.
\]
Elements of $H^2_d$ are holomorphic functions on $\bB_d$. This space arises in several different contexts, which greatly increases the tools available for its analysis. For example, $H^2_d$ is the symmetric Fock space in $d$-variables \cite{Arv98}, and thus a quotient of the full Fock space as well. It is also a complete Nevanlinna-Pick interpolation space \cite{quiggen93, mccullough92, DP98b} and a Besov-Sobolev space \cite{chen03}.  See the survey \cite{ShalitDAsurvey} for more information.

We say that a function $\phi:\bB_d\to \bC$ is a \emph{multiplier} is $\phi f\in H^2_d$ for every $f\in H^2_d$. Then the linear operator $M_{\phi}:H^2_d\to H^2_d$ given by 
\[
M_\phi f = \phi f \qfor f \in H^2_d
\]
is automatically continuous.  Denote by $\M_d$ the algebra of all multipliers on $H^2_d$. Whenever convenient, we identify a multiplier $\phi$ with the induced multiplication operator $M_{\phi}\in B(H^2_d)$. In particular, the \emph{multiplier norm} is defined as 
\[
\|\phi\| = \|\phi\|_{\M_d}=\|M_{\phi}\|_{B(H^2_d)} \qfor  \phi\in \M_d.
\]
This allows us to view $\M_d$ as an operator algebra on $H^2_d$. It should be noted that if we put
\[
\|\phi\|_{\infty}=\sup_{z\in \bB_d}|\phi(z)|
\]
then
\[
\|\phi\|_{\infty}\leq \|\phi\|_{\M_d}
\]
for each $\phi\in \M_d$.
However, the two norms are not comparable in general \cite{Arv98, DP98b}.
All polynomials are multipliers, and we denote by $\A_d$ the closure of the polynomials in the multiplier norm, so that 
\[
\bC[z_1,\ldots,z_d]\subset \A_d\subset \M_d.
\]
A straightforward calculation shows that 
\[
M_z=(M_{z_1},\ldots,M_{z_d})
\]
is a commuting row contraction, which is called the \emph{$d$-shift}. 

The dilation theorem for commuting row contractions as we state it is due to Arveson \cite{Arv98}. However this result has a long history. Drury \cite{Drury78} first established a von Neumann inequality using the $d$-shift. Later M\"uller and Vasilescu \cite{MV93} proved the full form of the dilation theorem. However in both cases, the $d$-shift was merely given as a family of weighted shifts, without context placing these shifts in the framework of an interesting Hilbert space of functions. The dilation theorem shows that from the point of view of dilation theory for commuting row contractions, the $d$-shift plays a role analogous to that of the usual isometric unilateral shift for single contractions.

Recall that a commuting $d$-tuple of operators $U=(U_1,\ldots,U_d)$ is called a \emph{spherical unitary} if each $U_k$ is normal and
\[
\sum_{k=1}^d U_k U_k^*=I.
\]

\begin{theorem}\label{T:arvdilation}
Let $T=(T_1,\ldots,T_d)$ be a commuting row contraction. Then, there exists a spherical unitary $U=(U_1,\ldots,U_d)$ acting on some Hilbert space $\U$ along with a cardinal number $\kappa$ with the property that $T^*$ is unitarily equivalent to $(M_{z}^{(\kappa)}\oplus U)^*|_{\H'}$ for some subspace $\H'\subset (H^2_d)^{(\kappa)}\oplus \U$ which is co-invariant for $M_{z}^{(\kappa)}\oplus U$.
\end{theorem}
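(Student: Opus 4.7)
My plan is to follow a $C^*$-algebraic approach via Arveson's extension theorem, Stinespring's dilation theorem, and the structure of the $C^*$-algebra generated by the $d$-shift. The starting point is the existence of a unital completely contractive homomorphism $\phi\colon \A_d \to B(\H)$ satisfying $\phi(p)=p(T)$ for every polynomial $p$; this is the operator-valued form of Drury's von Neumann inequality and is a direct consequence of the universal property of the $d$-shift recalled just before the theorem.

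Viewing $\A_d$ as a unital subalgebra of $\fA := C^*(M_{z_1},\ldots,M_{z_d}) \subset B(H^2_d)$, Arveson's extension theorem upgrades $\phi$ to a unital completely positive extension $\widetilde\phi\colon \fA \to B(\H)$. Since $\widetilde\phi|_{\A_d}$ is multiplicative, $\A_d$ sits inside the multiplicative domain of $\widetilde\phi$. Stinespring's theorem then produces a Hilbert space $\L$, a $*$-representation $\pi\colon \fA \to B(\L)$, and an isometry $V\colon \H \to \L$ with $\widetilde\phi(\cdot) = V^* \pi(\cdot) V$. The multiplicative-domain property forces $\pi(M_{z_k}) V = V T_k$ for each $k$, so $V\H$ is co-invariant for $\pi(M_{z_1}),\ldots,\pi(M_{z_d})$, and $T_k^*$ is unitarily equivalent to $\pi(M_{z_k})^*|_{V\H}$.

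It remains to identify $\pi$ with $M_z^{(\kappa)}\oplus U$ for some cardinal $\kappa$ and spherical unitary $U$. The key structural fact is that $I - \sum_{k=1}^d M_{z_k} M_{z_k}^*$ is the rank-one projection onto the constants in $H^2_d$, which forces $\fA$ to contain the compact operators on $H^2_d$ and to fit into a short exact sequence
\[
0 \longrightarrow \mathcal{K}(H^2_d) \longrightarrow \fA \longrightarrow C(\bS_d) \longrightarrow 0,
\]
where the quotient map sends $M_{z_k}$ to the $k$th coordinate function on $\bS_d$. Because the identity representation is, up to unitary equivalence, the unique irreducible representation of $\mathcal{K}(H^2_d)$, the representation $\pi$ decomposes orthogonally as $\pi_1 \oplus \pi_2$, where $\pi_1$ is a $\kappa$-fold amplification of the identity representation and so sends $M_{z_k}$ to $M_{z_k}^{(\kappa)}$ on $(H^2_d)^{(\kappa)}$, while $\pi_2$ annihilates $\mathcal{K}(H^2_d)$, factors through $C(\bS_d)$, and hence corresponds to a commuting $d$-tuple of normal operators with joint spectrum in $\bS_d$, i.e., a spherical unitary $U$ on some $\U$. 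Setting $\H' := V\H \subset (H^2_d)^{(\kappa)} \oplus \U$ delivers the required co-invariant subspace.

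The main obstacle I anticipate is the structural decomposition of $\pi$ in the last step: verifying the rank-one defect of $M_z$, extracting the exact sequence, and invoking the representation theory of separable type~I $C^*$-algebras to produce the clean splitting of $\pi$ into the two summands. Once these ingredients are in place, the remainder of the argument is a direct assembly of the universal property of $\A_d$ with the Arveson--Stinespring machinery.
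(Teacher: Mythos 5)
The paper offers no proof of this theorem: it is quoted from Arveson \cite{Arv98} (with the earlier contributions of Drury and M\"uller--Vasilescu), so your outline has to stand on its own, and it has a genuine gap at the step where you pass from multiplicativity to the multiplicative domain. Multiplicativity of $\widetilde\phi|_{\A_d}$ does \emph{not} place $\A_d$ in the multiplicative domain of $\widetilde\phi$: membership there requires the Schwarz equalities $\widetilde\phi(aa^*)=\widetilde\phi(a)\widetilde\phi(a)^*$ and $\widetilde\phi(a^*a)=\widetilde\phi(a)^*\widetilde\phi(a)$, which involve adjoints and do not follow from multiplicativity on a non-selfadjoint subalgebra. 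A concrete counterexample: take $\H=\bC$, $T=(0,\dots,0)$, and the UCP extension $\widetilde\phi(X)=\langle Xz_1,z_1\rangle_{H^2_d}$ of the functional calculus $f\mapsto f(0)$ to $\fA=C^*(M_{z_1},\dots,M_{z_d})$. It is multiplicative on $\A_d$, yet $\widetilde\phi(M_{z_1}M_{z_1}^*)=\|M_{z_1}^*z_1\|^2=1\neq 0=T_1T_1^*$, and its minimal Stinespring dilation is the identity representation of $\fA$ on $H^2_d$ with $V1=z_1$, whose range $\bC z_1$ is not invariant under $M_{z_1}^*$. So for an arbitrary Arveson extension all you can conclude from multiplicativity is Sarason semi-invariance of $V\H$ (i.e.\ $f(T)=P_{V\H}\pi(f(M_z))|_{V\H}$), not the co-invariance the theorem asserts; and you cannot repair this by replacing $V\H$ with the larger invariant subspace in a semi-invariant decomposition, because the restriction of $M_z^{(\kappa)}\oplus U$ to an invariant subspace need not again be of that form (Beurling's theorem fails in $H^2_d$ for $d\geq 2$).

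The missing ingredient is exactly the technical heart of Arveson's argument: one must produce a UCP extension with the module property $\widetilde\phi(f(M_z)X)=f(T)\,\widetilde\phi(X)$ for $f\in\A_d$ and $X\in\fA$ (his ``$A$-morphisms''); for such a map, minimality of the Stinespring dilation forces $\pi(f(M_z))V=Vf(T)$, hence genuine co-invariance, and its construction uses the defect operator $\Delta=(I-\sum_k T_kT_k^*)^{1/2}$ and a Poisson-kernel/limit argument (M\"uller--Vasilescu construct the co-extension directly this way, with Athavale's theorem supplying the spherical unitary from the spherical isometry in the limit part). By contrast, the $C^*$-structure you invoke --- the rank-one defect, the sequence $0\to\K(H^2_d)\to\fA\to C(\bS_d)\to 0$, and the splitting $\pi\cong\mathrm{id}^{(\kappa)}\oplus\sigma$ with $\sigma$ factoring through $C(\bS_d)$ --- is correct (and needs no separability or type~I theory beyond the standard treatment of representations of algebras containing the compacts); that part is indeed how the spherical unitary summand arises. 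Finally, be careful with your starting point: the \emph{complete} contractivity of $p\mapsto p(T)$ in the multiplier norm is not ``recalled just before the theorem'' --- in the paper it is deduced \emph{from} this dilation theorem, and Drury's inequality is only the scalar statement --- so as written your argument is circular in the paper's logical order unless you import an independent proof of the complete von Neumann inequality (e.g.\ via the Frazho--Bunce--Popescu co-extension together with the identification of $\M_d$ as the quotient of the noncommutative analytic Toeplitz algebra, or Arveson's construction itself).
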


We usually identify $\H$ with $\H'$ so that $\H\subset (H^2_d)^{(\kappa)}\oplus \U$. Furthermore, the co-extension can be chosen to act on the smallest reducing subspace for $M_{z}^{(\kappa)}\oplus U$ containing $\H$. In this case, the co-extension is essentially unique and is referred to as the \emph{minimal co-extension}.
Note also that the cardinal $\kappa$ is the dimension of the so-called defect space associated to $T$, but we will not require this more precise information. 

Drury's von Neumann inequality for commuting row contractions follows immediately from Theorem \ref{T:arvdilation} upon replacing the algebra $H^\infty(\bD)$ by $\M_d$. Indeed, if $T=(T_1,\ldots,T_d)$ is a commuting row contraction, then
\[
\|p(T_1,\ldots,T_d)\|\leq \|p\|_{\M_d} \qfor  p\in \bC[z_1,\ldots,z_d].
\]
The dilation theorem actually shows that the matrix version of this inequality is also valid, just as in the one variable case. In particular, there exists a unital completely contractive algebra homomorphism
\[
\Phi_T:\A_d\to B(\H)
\]
such that $\Phi_T(z_k)=T_k$ for $1\leq k \leq d$.

\subsection{The dual space of $\A_d$}\label{SS:Henkin}

Recall now that $\M_d\subset B(H^2_d)$. An important property of $\M_d$ is that it is closed in the weak-$*$ topology of $B(H^2_d)$. It is easily verified that a sequence $\{f_n\}_n\subset \M_d$ converges to $0$ in the weak-$*$ topology if and only if it is bounded in the multiplier norm and converges pointwise to $0$ on the open unit ball $\bB_d$.

The basic motivation underlying this project is to determine  which commuting row contractions $T=(T_1,\ldots,T_d)$ have the property that the  associated functional calculus $\Phi_T$ extends  to a weak-$*$ continuous contractive map on the whole multiplier algebra $\M_d$.  
By analogy with the single variable case, and also with the non-commutative row contractive case of  \cite{DLP}, we make the following definition. We will provide in Corollary \ref{C:abscontchar} below the measure theoretic counterpart to this analogy. 

\begin{defn}
A commuting row contraction $T=(T_1,\ldots,T_d)$ is \emph{absolutely continuous} if the  associated functional calculus $\Phi_T$ extends to a weak-$*$ continuous map on the whole multiplier algebra $\M_d$. 
\end{defn}

Such an extension, if it exists, is unique and is automatically a completely contractive algebra homomorphism. 

We will be interested in the dual space $\A_d^*$. This was the object of interest in \cite{CD}, and the following is one of the main results contained therein.
Recall that given an operator algebra $\A\subset B(\H)$, the second dual $\A^{**}$ also has a natural operator algebra structure (see \cite[Section 2.5]{blecherlemerdy} for details).

\begin{theorem}\label{T:dual}
There exist a commutative von Neumann algebra $\fW$ and a completely isometric algebra isomorphism
\[
\Theta: \A_d^{**}\to \M_d \oplus \fW.
\]
Moreover, $\Theta$ is a weak-$*$ continuous homeomorphism and induces an isometric identification
\[
\Theta_*: \A_d^*\to\M_{d*}\oplus_1 \fW_*.
\]
\end{theorem}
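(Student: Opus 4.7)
The plan is to identify $\A_d^{**}$ with the weak-$*$ closure of the image of $\A_d$ under a universal row contractive representation, and then exhibit the claimed direct sum decomposition at the level of that representation. By Theorem \ref{T:arvdilation}, every commuting row contraction is (up to unitary equivalence) a compression of some $M_z^{(\kappa)}\oplus U$ with $U$ a spherical unitary. Taking the direct sum over a sufficiently large family of such dilations yields a single completely isometric representation $\pi:\A_d\to B(\K_{sh})\oplus B(\K_{sp})$, where the first summand consists of copies of the $d$-shift and the second of spherical unitaries.

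First, I would analyze the weak-$*$ closure of $\pi(\A_d)$ in each summand. On $\K_{sh}$, since $\A_d$ is weak-$*$ dense in $\M_d$ and $\M_d$ is itself weak-$*$ closed in $B(H^2_d)$, the closure is completely isometrically and weak-$*$ homeomorphically isomorphic to $\M_d$. On $\K_{sp}$, the spectral theorem yields a commutative von Neumann algebra $\fW$. The diagonal map $p\mapsto (p(M_z),p(U))$ then induces a completely contractive algebra homomorphism from $\A_d$ into the $\ell^\infty$-direct sum $\M_d\oplus\fW$; the dilation theorem combined with Arveson's extension theorem shows it is completely isometric.

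Next, I would invoke the standard identification of $\A_d^{**}$ with the weak-$*$ closure of $\A_d$ inside any dual operator algebra containing $\A_d$ as a weak-$*$ dense completely isometric subalgebra (see \cite[Section~2.5]{blecherlemerdy}). Combining this with the previous step produces the completely isometric algebra isomorphism $\Theta:\A_d^{**}\to\M_d\oplus\fW$, which is a weak-$*$ homeomorphism by construction. Dualizing yields $\Theta_*:\A_d^*\to(\M_d\oplus\fW)_*=\M_{d*}\oplus_1\fW_*$ isometrically.

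The main obstacle is proving that the image of $\A_d$ in $\M_d\oplus\fW$ really is weak-$*$ dense, and not merely dense in some proper diagonal subalgebra. Equivalently, every element of $\A_d^*$ must split uniquely and isometrically as the sum of a weak-$*$ continuous functional on $\M_d$ (a \emph{Henkin} functional) and one that is \emph{singular} with respect to $\M_d$. This is a multivariate analogue of the F. and M. Riesz theorem in the multiplier-norm setting, and it is the technical heart of the argument: one must carefully separate the shift and spherical unitary parts of the Arveson dilation at the level of functionals, so that no weak-$*$ limit of polynomials coming from a singular spherical unitary can be absorbed into $\M_d$, and vice versa. Producing this clean splitting, presumably through a Hahn-Banach-style argument paired with a fine analysis of states on the relevant $C^*$-envelope, is where the bulk of the work lies.
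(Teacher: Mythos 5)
A point of reference first: the paper does not actually prove Theorem \ref{T:dual}; it imports it from \cite{CD} (Theorem 4.2 and its Corollary 4.3), so your sketch has to be measured against the argument given there. Your overall picture --- separate a shift-type summand from a spherical-unitary summand of a suitably universal representation, and split every functional on $\A_d$ isometrically into a Henkin part and a singular part --- has the right flavour, and you correctly locate the decomposition $\A_d^*=\M_{d*}\oplus_1\fW_*$ as the technical heart. But two of your steps fail as stated, and they are not merely the ``bulk of the work'' you defer; they are missing ideas.

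First, the identification principle you invoke is false: $\A_d^{**}$ is \emph{not} the weak-$*$ closure of $\A_d$ inside an arbitrary dual operator algebra containing it weak-$*$ densely and completely isometrically. The multiplier algebra $\M_d$ itself is a counterexample: $\A_d$ is weak-$*$ dense in $\M_d$, yet $\M_d\neq\A_d^{**}$ --- the whole content of the theorem is that $\A_d^{**}$ is strictly larger. The legitimate identification is of $\A_d^{**}$ with the weak-$*$ closure of $\A_d$ inside the second dual of a $C^*$-algebra containing it, e.g.\ the Toeplitz algebra $\T_d=C^*(\A_d)$, where the exact sequence $0\to\K\to\T_d\to C(\bS_d)\to 0$ (with $\K$ the compact operators) yields $\T_d^{**}\cong B(H^2_d)\oplus C(\bS_d)^{**}$; this is essentially how \cite{CD} proceeds. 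Second, your $\fW$ is not the right object: lumping \emph{all} spherical unitaries into the second summand does not produce a von Neumann algebra, since the weak-$*$ closed non-self-adjoint algebra generated by a commuting normal tuple is in general a proper subalgebra of the von Neumann algebra it generates (for the bilateral shift one gets $H^\infty$ inside $L^\infty$), and spherical unitaries with $\A_d$--Henkin spectral measure must be absorbed into the $\M_d$ summand rather than contribute a separate summand --- otherwise weak-$*$ density of the diagonal image in $\M_d\oplus\fW$ is simply false. The correct $\fW$ is the weak-$*$ closed algebra generated by the image of $\A_d$ in the \emph{totally singular} part of $C(\bS_d)^{**}$, and the fact that this algebra is self-adjoint, hence a commutative von Neumann algebra, is a genuine theorem of \cite{CD}, proved with Cole--Range/peak-interpolation machinery rather than handed over by the spectral theorem. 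Combined with the unproved isometric Henkin/singular splitting of functionals, which you explicitly leave open, the proposal as written does not yet constitute a proof.
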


The existence of $\Theta$ was established in Theorem 4.2 in \cite{CD}. The fact that it is a weak-$*$ homeomorphism was shown in the proof of Corollary 4.3 in \cite{CD}, which also shows the existence of $\Theta_*$.

Accordingly, we identify $\A_d^*$ with $\M_{d*}\oplus_1 \fW_*$ henceforth. Elements of $\M_{d*}\subset \A_d^*$ extend to be weak-$*$ continuous on $\M_d$. For that reason, they are called \emph{$\A_d$--Henkin functionals} by analogy with the case of the ball algebra (see \cite[Chapter 9]{Rudin}). 
At the other extreme are the elements of $\fW_*$ which are called \emph{$\A_d$--totally singular functionals}. Note in particular that the only functional that is $\A_d$--Henkin and $\A_d$--totally singular is the zero functional.

The next result is a combination of Theorems 3.3 and 4.4 from \cite{CD}. We let $\bS_d$ denote the unit sphere of $\bC^d$.

\begin{theorem}\label{T:measures}
Let $\Phi\in \A_d^*$. The following statements hold.
\begin{enumerate}
\item[\rm{(i)}] $\Phi\in \M_{d*}$ if and only if 
\[
\lim_{n\to \infty}\Phi(f_n)=0
\]
whenever $\{f_n\}_n\subset \A_d$ converges to $0$ in the weak-$*$ topology of $\M_d$.
\item[\rm{(ii)}] If $\Phi\in \fW_{*}$, then there exists a regular Borel measure $\mu$ on $\bS_d$ such that $\|\mu\|=\|\Phi\|$ and 
\[
\Phi(f)=\int_{\bS_d}f\,d\mu \qfor  f\in \A_d.
\]
\end{enumerate}
\end{theorem}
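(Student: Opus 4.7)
The plan is to prove the two parts separately, both by exploiting the direct sum decomposition $\A_d^* = \M_{d*} \oplus_1 \fW_*$ from Theorem \ref{T:dual}. For the forward direction of (i) nothing is really required: by definition an element of $\M_{d*}$ extends to a weak-$*$ continuous functional on $\M_d$, so it carries any bounded sequence in $\A_d$ that converges weak-$*$ to $0$ in $\M_d$ to a null sequence of scalars.

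For the converse of (i), I would write $\Phi = \Phi_H + \Phi_S$ with $\Phi_H \in \M_{d*}$ and $\Phi_S \in \fW_*$. Since $\Phi_H$ already satisfies the sequential vanishing property (by the forward direction just noted), the hypothesis forces $\Phi_S = \Phi - \Phi_H$ to satisfy it too. The task then reduces to showing that the only $\Phi_S \in \fW_*$ that annihilates every weak-$*$ null sequence of $\A_d$ elements is $\Phi_S = 0$. To do this, assume for contradiction that $\Phi_S \neq 0$; invoke part (ii) to represent it by a nonzero regular Borel measure $\mu$ on $\bS_d$, and then construct from $\mu$ an explicit sequence of polynomials that is bounded in the multiplier norm and converges pointwise to $0$ on $\bB_d$ (hence weak-$*$ null in $\M_d$) yet whose integrals against $\mu$ do not vanish.

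For part (ii), the route is through Hahn--Banach and the Riesz representation theorem. Since $\|f\|_\infty \leq \|f\|_{\M_d}$ on $\A_d$ and polynomials are dense, the identity gives a contractive inclusion $\A_d \hookrightarrow C(\bS_d)$ via boundary values. If one can show that every $\Phi \in \fW_*$ is bounded on $\A_d$ with respect to the sup norm, with $\||\Phi|_{\A_d}\|_\infty^* = \|\Phi\|$, then Hahn--Banach produces a norm-preserving extension to $C(\bS_d)$, and Riesz delivers a regular Borel measure $\mu$ on $\bS_d$ with $\|\mu\| = \|\Phi\|$ that represents $\Phi$ on $\A_d$.

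The main obstacle is exactly this sup-norm control for $\fW_*$ functionals, which cannot be extracted from the abstract decomposition of Theorem \ref{T:dual} alone. It depends on the concrete realization of $\fW$ carried out in \cite{CD}: namely that $\fW$ is naturally identified with a commutative von Neumann algebra encoding the boundary behavior of multipliers, with predual pairing canonically with measures on $\bS_d$. This same structural fact is also what powers the construction of the witnessing sequences required in (i), so both parts ultimately hinge on the sphere-supported nature of the totally singular piece of $\A_d^*$.
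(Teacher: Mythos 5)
First, note that the paper does not prove this statement at all: it is imported verbatim, with the one-line attribution ``a combination of Theorems 3.3 and 4.4 from \cite{CD}''. So the only way your proposal could succeed is by actually reproving those two results of \cite{CD}, and it does not. For part (ii), the Hahn--Banach plus Riesz endgame you describe is the easy formal step; the entire mathematical content is the claim that every $\Phi\in\fW_*$ satisfies $|\Phi(f)|\leq \|\Phi\|\,\|f\|_\infty$ on $\A_d$. You explicitly defer this to ``the concrete realization of $\fW$ carried out in \cite{CD}'', which is nothing but a pointer to the statement being proven (this sup-norm control \emph{is} Theorem 4.4 of \cite{CD}); it certainly does not follow from the abstract decomposition of Theorem \ref{T:dual}, as you acknowledge. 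So (ii) is asserted, not proved.

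For part (i), the forward direction is indeed immediate, but your converse pushes all the work into the unproved claim that from any nonzero measure $\mu$ representing a nonzero $\Phi_S\in\fW_*$ one can construct a multiplier-norm-bounded sequence of polynomials tending to $0$ pointwise on $\bB_d$ whose integrals against $\mu$ do not vanish. That claim is essentially a restatement of what you are trying to prove (namely that a nonzero totally singular functional is not sequentially Henkin), and making it honest requires knowing that $\A_d$-totally singular measures are concentrated on $\A_d$-totally null sets and that such sets admit peaking functions with controlled multiplier norm --- i.e., Theorem \ref{T:bands} and the peak interpolation results of \cite{CD}, which in that paper are established \emph{after}, and partly by means of, Theorems 3.3 and 4.4. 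As organized, your argument is therefore circular (it also invokes (ii) inside the proof of (i)). A non-circular route for the converse of (i) avoids measures entirely: since $H^2_d$ is separable, $\M_{d*}$ is separable and the weak-$*$ topology on bounded subsets of $\M_d$ is metrizable; combining this with the weak-$*$ density of the ball of $\A_d$ in the ball of $\M_d$ (via the dilates $f_r(z)=f(rz)$, which lie in $\A_d$ with $\|f_r\|_{\M_d}\leq\|f\|_{\M_d}$), one extends $\Phi$ to $\M_d$ by $\tilde\Phi(f)=\lim_{r\to 1}\Phi(f_r)$, checks weak-$*$ sequential continuity on bounded sets, and concludes by a Krein--Smulian type argument; this is the spirit of the argument in \cite{CD}, and it is the step your sketch would need to supply.
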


A standard argument based on part (i) of Theorem \ref{T:measures} shows that a commuting row contraction $T=(T_1,\ldots,T_d)$ is absolutely continuous if and only if for every sequence $\{f_n\}_n\subset \A_d$ converging to $0$ in the weak-$*$ topology of $\M_d$, we have that $\{f_n(T)\}_n$ converges to $0$ in the weak-$*$ topology (see the proof of Lemma 1.1 of \cite{eschmeier97}). This observation will be used throughout the paper.

The measures giving rise to functionals in $\fW_*$ via integration are said to be $\A_d$--totally singular, while those giving rise to functionals in $\M_{d*}$ are called $\A_d$--Henkin. These two sets of measures form complementary bands of measures on $\bS_d$, as the next result shows (see Theorem 5.4 in \cite{CD}).

\begin{theorem}\label{T:bands}
Let $\mu,\nu$ be regular Borel measures on $\bS_d$ such that $\mu$ is absolutely continuous with respect to $\nu$. If $\nu$ is $\A_d$-totally singular then so is $\mu$. If $\nu$ is $\A_d$-Henkin then so is $\mu$.
Moreover every measure on $\bS_d$ decomposes uniquely as the sum of an $\A_d$-Henkin measure and an $\A_d$-totally singular measure.
\end{theorem}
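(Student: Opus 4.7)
The plan is to use Theorem \ref{T:dual} and Theorem \ref{T:measures}(ii) to produce a candidate decomposition at the functional level, and then to promote it to the measure level via a band argument. For existence, given a regular Borel measure $\mu$ on $\bS_d$, integration gives a functional $\Phi_\mu \in \A_d^*$. Theorem \ref{T:dual} yields a unique decomposition $\Phi_\mu = \Psi_H + \Psi_S$ with $\Psi_H \in \M_{d*}$ and $\Psi_S \in \fW_*$. By Theorem \ref{T:measures}(ii), there is a regular Borel measure $\mu_S$ on $\bS_d$ representing $\Psi_S$, which is then $\A_d$-totally singular by definition. Setting $\mu_H := \mu - \mu_S$, a direct computation shows $\Phi_{\mu_H} = \Psi_H \in \M_{d*}$, so $\mu_H$ is $\A_d$-Henkin.

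For the Henkin band, suppose $\nu$ is $\A_d$-Henkin and $\mu = h\nu$ for some $h \in L^1(|\nu|)$. I would introduce
\[
H := \{h \in L^1(|\nu|) : h\nu \text{ is } \A_d\text{-Henkin}\}
\]
and argue $H = L^1(|\nu|)$. Closedness of $H$ in $L^1(|\nu|)$ follows from the bound $\|f\|_\infty \leq \|f\|_{\M_d}$ via the split $\int f_n h\, d\nu = \int f_n(h-h_k)\, d\nu + \int f_n h_k\, d\nu$, for any $\{f_n\} \subset \A_d$ bounded in multiplier norm and tending pointwise to $0$. Moreover $H$ is an $\A_d$-submodule: for $g \in \A_d$ and $h \in H$, the sequence $\{g f_n\}$ is bounded in multiplier norm and pointwise null, so $\int f_n(gh)\, d\nu = \int(g f_n)h\, d\nu \to 0$. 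Since $1 \in H$ by hypothesis, we conclude $\A_d \subset H$. To reach $H = L^1(|\nu|)$, one then establishes $\overline{\A_d} \subset H$; combined with closedness of $H$ and the Stone--Weierstrass density of $\A_d + \overline{\A_d}$ in $C(\bS_d)$ (hence in $L^1(|\nu|)$), this forces $H = L^1(|\nu|)$. The totally singular band is handled by a symmetric argument, using that $\fW_*$ is a closed $\A_d$-submodule of $\A_d^*$ by virtue of $\fW$ being an ideal in $\A_d^{**} = \M_d \oplus \fW$.

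For uniqueness, the band properties imply that the Henkin and totally singular classes intersect trivially: a measure in both yields a functional in $\M_{d*} \cap \fW_* = \{0\}$, and since $|\mu|$ lies in both bands too (being mutually absolutely continuous with $\mu$), one obtains $|\mu|(\bS_d) = \Phi_{|\mu|}(1) = 0$, so $\mu = 0$. Uniqueness at the measure level is then immediate, as two decompositions differ by a measure that is simultaneously $\A_d$-Henkin and $\A_d$-totally singular, hence zero. The main obstacle is to establish $\overline{\A_d} \subset H$, which is beyond the reach of the naive $\A_d$-module argument; it should follow by a delicate analysis of how multiplication by conjugates $\bar g$ acts on measures, coupled with preservation of $\M_{d*}$ under this action, a consequence of the ideal decomposition $\A_d^{**} = \M_d \oplus \fW$.
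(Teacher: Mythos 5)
Your scaffolding is fine, but the heart of the theorem is missing. The existence of the decomposition (push $\Phi_\mu$ through $\A_d^*=\M_{d*}\oplus_1\fW_*$, represent the singular part by a measure $\mu_S$ via Theorem \ref{T:measures}(ii), and set $\mu_H=\mu-\mu_S$) is correct, and your uniqueness argument correctly reduces to showing that a measure which is simultaneously $\A_d$--Henkin and $\A_d$--totally singular vanishes, via $|\mu|\ll\mu$ and $\Phi_{|\mu|}(1)=0$. But both uniqueness and the statement itself hinge on the band property, and that is precisely the step you do not prove. The $\A_d$--module-plus-$L^1$-closure argument only shows that $H$ contains the closed $\A_d$--submodule of $L^1(|\nu|)$ generated by $1$, which is far from $L^1(|\nu|)$; the essential ingredient is stability of the Henkin (and totally singular) class under multiplication by conjugates $\ol{z_k}$, equivalently by arbitrary bounded Borel densities, and you explicitly defer this ("the main obstacle") with only a vague appeal to the ideal decomposition $\A_d^{**}=\M_d\oplus\fW$. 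That decomposition concerns the action of $\A_d$ (and its bidual) on $\A_d^*$; multiplication of measures by $\ol{g}$ for $g\in\A_d$ is not induced by any element of $\A_d^{**}$, so nothing in the formal duality picture controls it. This is a genuine gap, not a routine verification: it is exactly the analytic content of the theorem, and in fact the paper does not prove the result at all but imports it from \cite{CD} (Theorem 5.4 there), where it requires the finer measure-theoretic machinery developed in that paper rather than the abstract module structure alone.

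There is also a concrete error in the closing step: Stone--Weierstrass does not give density of $\A_d+\ol{\A_d}$ in $C(\bS_d)$ when $d\geq 2$, because this linear span is not an algebra and its closure is proper. For instance, $z_1\ol{z_2}$ is orthogonal in $L^2(\sigma)$ ($\sigma$ the surface measure) to every holomorphic and every anti-holomorphic polynomial, since spherical harmonics of distinct bidegrees are orthogonal; hence it does not lie in the uniform closure of $\A_d+\ol{\A_d}$. To run your scheme one would need $H$ to contain the full $*$-algebra generated by the coordinate functions, i.e.\ to be stable under multiplication by each $z_k$ \emph{and} each $\ol{z_k}$ -- which is again the unproven step above. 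So even granting your claimed inclusion $\ol{\A_d}\subset H$, the argument as written does not close for $d\geq 2$.
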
 

Finally, it is important to note that since the multiplier norm and the supremum norm do not coincide, there are $\A_d$--Henkin functionals that are not given by integration against a measure. This phenomenon makes part of our analysis more complicated than that in \cite{eschmeier97}.

\section{A characterization of absolute continuity} \label{S:abscont}

In this section we establish our main results characterizing absolutely continuous commuting  row contractions. To establish these characterizations, we need to delve deeper into each summand of $\A_d^{*}$ and $\A_d^{**}$. First, we analyze the summand $\M_{d*}$ a bit further. The following lemma is not surprising.

\begin{lemma}\label{L:spmeasure}
Let $U=(U_1,\ldots,U_d)$ be a spherical unitary acting on some separable Hilbert space. Then, $U$ is absolutely continuous if and only if its spectral measure is $\A_d$--Henkin.
\end{lemma}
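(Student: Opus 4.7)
The plan is to use the joint spectral measure of the commuting normal tuple. Since $\sum_k U_k U_k^* = I$, the projection-valued spectral measure $E$ of $U$ is supported on $\bS_d$, and for every $\xi, \eta \in \H$ the complex measure $\mu_{\xi,\eta}(B) := \langle E(B)\xi,\eta\rangle$ on $\bS_d$ satisfies
\[
\langle f(U)\xi,\eta\rangle = \int_{\bS_d} f\, d\mu_{\xi,\eta}, \qquad f \in \A_d,
\]
because elements of $\A_d$, being multiplier-norm limits of polynomials and hence uniform limits on $\ol{\bB_d}$, extend continuously to $\bS_d$. Using separability, fix a countable dense sequence $\{\xi_n\}$ of unit vectors in $\H$ and set $\mu := \sum_n 2^{-n} \mu_{\xi_n, \xi_n}$; a short verification (using boundedness of $E(B)$ and density of $\{\xi_n\}$) shows that $\mu$ is a scalar spectral measure for $U$, so every $\mu_{\xi, \eta}$ is absolutely continuous with respect to $\mu$. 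The task then reduces to tracking the $\A_d$-Henkin property through these measures.

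For the ``if'' direction, assume $\mu$ is $\A_d$-Henkin. By Theorem \ref{T:bands}, every $\mu_{\xi,\eta}$ is then $\A_d$-Henkin, so the functional $f \mapsto \int_{\bS_d} f\, d\mu_{\xi,\eta}$ on $\A_d$ extends weak-$*$ continuously to $\M_d$. Given $\{f_n\} \subset \A_d$ converging to $0$ in the weak-$*$ topology of $\M_d$, Theorem \ref{T:measures}(i) yields $\langle f_n(U)\xi,\eta\rangle \to 0$ for all $\xi,\eta$; combined with the uniform bound $\|f_n(U)\| \leq \|f_n\|_{\M_d}$ coming from the contractive $\A_d$-functional calculus, this upgrades from WOT to weak-$*$ convergence on a bounded set, so $f_n(U) \to 0$ in the weak-$*$ topology of $B(\H)$, and $U$ is absolutely continuous.

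Conversely, assume $U$ is absolutely continuous. Then each vector state $f \mapsto \langle f(U)\xi_n,\xi_n\rangle$ on $\A_d$ extends weak-$*$ continuously to $\M_d$, which means the positive measure $\mu_{\xi_n,\xi_n}$ is $\A_d$-Henkin for every $n$. The $\A_d$-Henkin measures form a norm-closed linear subspace of $M(\bS_d)$---either as the preimage of the closed subspace $\M_{d*} \subset \A_d^*$ under the contractive integration map $\mu \mapsto \int(\cdot)\, d\mu$, or equivalently via the band decomposition in Theorem \ref{T:bands}---so the norm-convergent series $\mu = \sum_n 2^{-n}\mu_{\xi_n,\xi_n}$ is again $\A_d$-Henkin. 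The main delicate point, and what makes this scalar-measure language work cleanly, is ensuring that the $\A_d$-Henkin property genuinely transfers between the projection-valued measure, the vector states $\mu_{\xi_n,\xi_n}$, and a chosen scalar representative of the spectral measure class; this transfer is exactly what Theorem \ref{T:bands} provides in both directions.
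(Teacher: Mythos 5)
Your proof is correct and takes essentially the same approach as the paper: both reduce absolute continuity, via the sequential criterion following Theorem \ref{T:measures}, to the statement that every measure absolutely continuous with respect to a scalar spectral measure of $U$ is $\A_d$--Henkin, and then invoke Theorem \ref{T:bands}. The only difference is cosmetic: the paper routes the weak-$*$ convergence through the identification $W^*(U_1,\ldots,U_d)\cong L^\infty(X,\mu)$ and its predual $L^1(\mu)$, while you work with the projection-valued measure, vector states $\mu_{\xi,\eta}$, the coincidence of the weak operator and weak-$*$ topologies on bounded sets, and the norm-closedness of the Henkin band.
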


\begin{proof}
Let $\fM=W^*(U_1,\ldots,U_d)$.
There is a weak-$*$ homeomorphic $*$-isomorphism
\[
\Theta: \fM\to L^\infty(X,\mu)
\] 
such that $\Theta(U_k)=z_k$,  where the probability measure $\mu$ on $X\subset \bS_d$ is the scalar spectral measure of $U$. By the remark following Theorem \ref{T:measures}, $U$ is absolutely continuous if and only if, for every sequence $\{f_n\}_n\subset \A_d$ converging to $0$ in the weak-$*$ topology of $\M_d$, we have that $\{f_n(U)\}_n$ converges to $0$ in the weak-$*$ topology of $\fM$. Equivalently, this last condition says that
\[
\lim_{n\to \infty} \int_{X} f_n h\,d\mu=0
\]
for every $h\in L^1(\mu)$. Clearly, this is equivalent to $\mu$ being $\A_d$--Henkin, by virtue of Theorem \ref{T:bands}.
\end{proof}

We now obtain a characterization of the absolute continuity of a commuting row contraction in terms of the spherical unitary part of its minimal co-extension.

\begin{theorem}\label{T:abscontdilation}
Let $T=(T_1,\ldots,T_d)$ be commuting row contraction and let $U=(U_1,\ldots, U_d)$ be the spherical unitary part of its minimal co-extension. Then, $T$ is absolutely continuous if and only if $U$ is absolutely continuous. Furthermore, if $T$ acts on a separable Hilbert space, then these conclusions are equivalent to the spectral measure of $U$ being $\A_d$--Henkin.
\end{theorem}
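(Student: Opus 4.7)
The plan is to prove the two directions of the first equivalence separately; the separable-case statement then follows immediately from Lemma \ref{L:spmeasure}. The direction $U$ absolutely continuous $\Rightarrow$ $T$ absolutely continuous is essentially formal. The $d$-shift $M_z$ on $H^2_d$ is absolutely continuous since its $\A_d$-functional calculus is just the inclusion $\A_d \hookrightarrow \M_d$. Absolute continuity is preserved by amplification and by direct summation, so $V := M_z^{(\kappa)} \oplus U$ is absolutely continuous. Since $T = P_\H V|_\H$ with $\H$ coinvariant for $V$, we have $f(T) = P_\H f(V)|_\H$ for all $f \in \A_d$, which shows that the weak-$*$ continuous extension for $V$ descends to one for $T$.

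For the nontrivial direction, assume $T$ is absolutely continuous. Given $h, h' \in \H$, decompose them as $h = h_0 \oplus u_0$ and $h' = h_0' \oplus u_0'$ along the orthogonal splitting $\K = (H^2_d)^{(\kappa)} \oplus \U$. For any $f \in \A_d$,
\[
\langle f(T) h, h'\rangle = \langle f(M_z^{(\kappa)}) h_0, h_0'\rangle + \langle f(U) u_0, u_0'\rangle.
\]
The left-hand side defines an $\A_d$-Henkin functional by hypothesis, and so does the first term on the right since $M_z^{(\kappa)}$ is absolutely continuous. Because $\M_{d*}$ is a norm-closed linear subspace of $\A_d^*$, the remaining functional $\phi_{u_0,u_0'}\colon f \mapsto \langle f(U)u_0, u_0'\rangle$ is $\A_d$-Henkin for all $u_0, u_0'$ in the closed span $E := \ol{\spn}\{P_\U h : h \in \H\}$.

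The next step transfers this from functionals to measures. For $u \in E$ the functional $\phi_{u,u}$ is represented by a positive regular Borel measure $\mu_{u,u}$ on $\bS_d$, via the continuous functional calculus of $U$ together with the inclusion $\A_d \subset A(\bB_d)$. Write $\mu_{u,u} = \mu_H + \mu_S$ according to Theorem \ref{T:bands}, with both summands positive. The functionals $\int \cdot \,d\mu_H \in \M_{d*}$ and $\int \cdot \,d\mu_S \in \fW_*$ sum to the Henkin functional $\phi_{u,u}$, so the uniqueness of the decomposition $\A_d^* = \M_{d*} \oplus_1 \fW_*$ from Theorem \ref{T:dual} forces the totally singular piece to vanish. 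Evaluating at the constant $1$ then yields $\|\mu_S\| = \mu_S(\bS_d) = 0$, so $\mu_{u,u}$ is $\A_d$-Henkin as a measure. Polarization extends this to $\mu_{u,v}$ for all $u, v \in E$.

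Finally, I spread the conclusion to all of $\U$ using minimality of the co-extension. The closed linear subspace $\ol{W^*(U)\,E}$ of $\U$ is reducing for $U$ and contains every $P_\U h$, so $(H^2_d)^{(\kappa)} \oplus \ol{W^*(U)E}$ is reducing for $V$ and contains $\H$; minimality forces $\ol{W^*(U)E} = \U$. For $e, e' \in E$ and bounded Borel $\psi, \psi'$ on $\sigma(U)$, the identity $\mu_{\psi(U)e, \psi'(U)e'} = \psi\ol{\psi'}\mu_{e,e'}$ shows that $\mu_{\psi(U)e,\psi'(U)e'}$ is absolutely continuous with respect to $|\mu_{e,e'}|$, hence $\A_d$-Henkin by the band property in Theorem \ref{T:bands}. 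A bilinearity-and-continuity argument (using $\|\mu_{u,v}\|_{\text{tv}} \leq \|u\|\|v\|$) upgrades this to all $u, v \in \U$, so $\phi_{u,v}$ is $\A_d$-Henkin for every $u, v \in \U$. Theorem \ref{T:measures}(i) then gives $\langle f_n(U)u, v\rangle \to 0$ whenever $f_n \to 0$ weak-$*$ in $\M_d$, which combined with the uniform bound $\|f_n(U)\|\leq \|f_n\|_{\M_d}$ yields weak-$*$ convergence $f_n(U) \to 0$ in $B(\U)$, i.e., absolute continuity of $U$. The main obstacle, in my estimation, is the measure-theoretic step in the third paragraph: the passage from a Henkin \emph{functional} to a Henkin \emph{measure} relies crucially on positivity, which is exactly what turns the annihilation of $\mu_S$ against the constant $1$ into $\mu_S = 0$ and unlocks the band-theoretic extension of the final paragraph.
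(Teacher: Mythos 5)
Your proposal is correct, and the easy direction (absolute continuity of $U$ implies that of $T$) is identical to the paper's. For the main direction, however, you take a genuinely different route. The paper stays entirely at the operator level: by minimality the vectors $U^{\alpha}P_{\K_2}\xi$, $\xi\in\H$, span the unitary summand, and the normality of $U$ is used to rewrite $\langle f_n(U)U^{\alpha}P_{\K_2}\xi,U^{\beta}P_{\K_2}\eta\rangle$ as $\langle f_n(U)U^{\beta*}P_{\K_2}\xi,U^{\alpha*}P_{\K_2}\eta\rangle$; since $\H$ is coinvariant, the vectors $(M_z^{(\kappa)}\oplus U)^{\beta*}\xi$ lie in $\H$, so this matrix entry is $\langle f_n(T)(M_z^{(\kappa)}\oplus U)^{\beta*}\xi,(M_z^{(\kappa)}\oplus U)^{\alpha*}\eta\rangle$ minus a term involving only $f_n(M_z^{(\kappa)})$, and absolute continuity of $T$ and of $M_z^{(\kappa)}$ finishes the argument with no measures appearing at all. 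You instead pass to the joint spectral measure of $U$: you show the functionals $\phi_{u,v}$ are $\A_d$--Henkin for $u,v$ in $E=\ol{\spn}\,P_{\U}\H$, upgrade the Henkin \emph{functional} $\phi_{u,u}$ to a Henkin \emph{measure} $\mu_{u,u}$ via positivity (a step you rightly single out, since Henkin functionals need not be represented by measures), and then spread over all of $\U$ using the identity $\mu_{\psi(U)e,\psi'(U)e'}=\psi\ol{\psi'}\mu_{e,e'}$, the band property of Theorem \ref{T:bands}, and minimality; note that your spreading step and the paper's Fuglede-type manipulation both exploit normality of $U$, just in different clothing. What the paper's argument buys is brevity and softness: it needs neither measure representations nor the band theorem, and it makes no implicit appeal to positivity of band components. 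What your argument buys is extra information along the way: you obtain that all vector spectral measures $\mu_{u,v}$, $u,v\in\U$, are $\A_d$--Henkin, which connects directly to Lemma \ref{L:spmeasure} and the separable statement. One point you should make explicit is the claim that the decomposition $\mu_{u,u}=\mu_H+\mu_S$ has \emph{positive} summands; this is not literally contained in the sentence of Theorem \ref{T:bands} you cite but follows from the fact that the two classes are complementary bands (so the components are mutually singular restrictions of $\mu_{u,u}$), which is how \cite[Theorem 5.4]{CD} is meant and is exactly what converts $\mu_S(\bS_d)=0$ into $\mu_S=0$.
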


\begin{proof}
By Theorem \ref{T:arvdilation}, up to unitary equivalence we have that
\[
T_k^*=(M_{z_k}^{(\kappa)}\oplus U_k)^*|\H \qfor  1 \leq k \leq d
\]
for some cardinal $\kappa$. By minimality, we have that $M_z^{(\kappa)}\oplus U$ acts on a space $\K=\K_1\oplus \K_2$ which coincides with the smallest reducing subspace containing $\H$. In particular, $\K$ is separable if $\H$ is separable.

Assume first that $U$ is absolutely continuous. Since $M_z^{(\kappa)}$ is trivially absolutely continuous and restriction to a co-invariant subspace preserves absolute continuity, it follows that $T$ is absolutely continuous as well.

Assume conversely that $T$ is absolutely continuous. We must show that $U$ is absolutely continuous. Since $\H$ is co-invariant under $M_z^{(\kappa)}\oplus U$, we have that
\[
\K_1\oplus \bigvee_{\alpha\in \bN^d} U^{\alpha} P_{\K_2}\H
\]
is a reducing subspace of $\K$ containing $\H$, and thus it coincides with $\K$. Let $\{f_n\}_n\subset \A_d$ be a sequence converging to $0$ in the weak-$*$ topology of $\M_d$. In particular, $\{f_n\}_n$ is bounded and so is $\{f_n(U)\}_n$, so that to show that $\{f_n(U)\}_n$ converges to $0$ in the weak-$*$ topology, it suffices to show that it converges to $0$ in the weak-operator topology. In fact, it suffices to establish that
\[
\lim_{n\to \infty} \langle f_n(U)U^{\alpha}P_{\K_2}\xi,U^\beta P_{\K_2}\eta \rangle=0
\]
for every pair of multi-indices $\alpha, \beta \in \bN^d$ and every $\xi,\eta\in \H$. Note that
\begin{align*}
\langle f_n(U)U^{\alpha}P_{\K_2}\xi, &U^\beta P_{\K_2}\eta \rangle \\
&=\langle f_n(U)U^{\beta*}P_{\K_2}\xi,U^{\alpha*} P_{\K_2}\eta \rangle\\\
&= \langle f_n(M_z^{(\kappa)}\oplus U)(M_z^{(\kappa)}\oplus U)^{\beta*}\xi,(M_z^{(\kappa)}\oplus U)^{\alpha*} \eta \rangle\\
&\qquad-\langle f_n(M_z^{(\kappa)})M_z^{(\kappa)\beta*}P_{\K_1}\xi,M_z^{(\kappa)\alpha*} P_{\K_1}\eta \rangle\\
&=\langle f_n(T)(M_z^{(\kappa)}\oplus U)^{\beta*}\xi,(M_z^{(\kappa)}\oplus U)^{\alpha*} \eta \rangle\\
&\qquad-\langle f_n(M_z^{(\kappa)})M_z^{(\kappa)\beta*}P_{\K_1}\xi,M_z^{(\kappa)\alpha*} P_{\K_1}\eta \rangle
\end{align*}
where the last equality follows since $(M_z^{(\kappa)}\oplus U)^{\beta*}\xi$ and $(M_z^{(\kappa)}\oplus U)^{\alpha*} \eta$ belong to $\H$. Since $M_z^{(\kappa)}$ and $T$ are absolutely continuous, we conclude that
\[
\lim_{n\to \infty} \langle f_n(U)U^{\alpha}P_{\K_2}\xi,U^\beta P_{\K_2}\eta \rangle=0 ;
\]
and thus $U$ is absolutely continuous.

The final statement follows from Lemma \ref{L:spmeasure}.
\end{proof}

Next, we focus on the summand $\fW\subset \A_d^{**}$. 
Given $f\in \A_d$, we denote by $\widehat{f}$ its canonical image in $\A_d^{**}$. Let $E_1=1\oplus 0, E_2=0\oplus 1$ which both lie in  $\A_d^{**}$. Hence, multiplication on the left or on the right by $E_k$ is the projection onto the $k$-th component of $\A_d^{**}$. These projections are weak-$*$ continuous by \cite{CER}. They have another important property, which is contained in the following lemma. 

Recall first that any bounded linear map $\rho:\A_d\to B(\H)$ has a unique weak-$*$ continuous extension $\widehat{\rho}:\A_d^{**}\to B(\H)$ defined as $\widehat{\rho}=\iota^*\circ \rho^{**}$ where 
\[
\iota:B(\H)_*\to B(\H)^*
\]
is the canonical embedding. If $\rho$ is multiplicative, then so is $\widehat{\rho}$.

\begin{lemma}\label{L:Phias}
Let $\Phi\in \A_d^*$ and let $\widehat{\Phi}:\A_d^{**}\to \bC$ be its unique weak-$*$ continuous extension. Define $\Phi_a,\Phi_s\in \A_d^*$ as follows
\[
\Phi_a(f)=\widehat{\Phi}(E_1 \widehat{f}), \quad \Phi_s(f)=\widehat{\Phi}(E_2 \widehat{f}), \quad f\in \A_d.
\]
Then, $\Phi_a$ extends to be weak-$*$ continuous on $\M_d$ and $\Phi_s\in \fW_*$.
\end{lemma}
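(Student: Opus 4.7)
The plan is to transport everything to the concrete model provided by Theorem \ref{T:dual}. Identify $\A_d^{**}$ with $\M_d \oplus \fW$ via $\Theta$, so that $E_1 = (1,0)$ and $E_2 = (0,1)$ are the central idempotents corresponding to the two summands. Dually, $\Theta_*$ identifies $\A_d^*$ with $\M_{d*} \oplus_1 \fW_*$, and I write $\Phi$ as a pair $(\phi_a, \phi_s)$ with $\phi_a \in \M_{d*}$ and $\phi_s \in \fW_*$. The unique weak-$*$ continuous extension $\widehat{\Phi}: \A_d^{**} \to \bC$ is then simply the functional $(m,w) \mapsto \phi_a(m) + \phi_s(w)$ on $\M_d \oplus \fW$, which is weak-$*$ continuous by construction.

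The one piece of additional information I need is a description of the canonical embedding $\A_d \hookrightarrow \A_d^{**}$ in these coordinates. The composition $\Theta \circ \widehat{\cdot}: \A_d \to \M_d \oplus \fW$ is a unital algebra homomorphism, so it has the form $f \mapsto (\iota_1(f), \iota_2(f))$ for unital homomorphisms $\iota_1: \A_d \to \M_d$ and $\iota_2: \A_d \to \fW$. The key point, which I would verify by tracing through the construction of $\Theta$ in \cite{CD}, is that $\iota_1$ is the natural inclusion $\A_d \hookrightarrow \M_d$; this is forced because the $\M_d$-summand of $\Theta$ corresponds on $\A_d^*$ precisely to the $\A_d$-Henkin functionals, which by Theorem \ref{T:measures}(i) are the functionals that restrict from weak-$*$ continuous functionals on $\M_d$ under the natural inclusion.

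With this in hand, the computation collapses. Since $E_1$ and $E_2$ are central idempotents in $\M_d \oplus \fW$, we have $\Theta(E_1 \widehat{f}) = (f, 0)$ and $\Theta(E_2 \widehat{f}) = (0, \iota_2(f))$. Therefore
\[
\Phi_a(f) = \widehat{\Phi}(E_1 \widehat{f}) = \phi_a(f) \qand \Phi_s(f) = \widehat{\Phi}(E_2 \widehat{f}) = \phi_s(\iota_2(f))
\]
for every $f \in \A_d$. The first identity exhibits $\Phi_a$ as the restriction to $\A_d$ of $\phi_a \in \M_{d*}$, which is by definition a weak-$*$ continuous functional on $\M_d$, so $\Phi_a$ extends weak-$*$ continuously to $\M_d$ via $\phi_a$. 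The second identity exhibits $\Phi_s$ as the image of $(0, \phi_s) \in \M_{d*} \oplus_1 \fW_*$ under the inverse of $\Theta_*$, so $\Phi_s$ lies in the $\fW_*$-summand of $\A_d^*$.

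The main obstacle in this approach is the verification that $\iota_1$ is really the natural inclusion, rather than some other homomorphism that happens to land in $\M_d$; everything else is a direct unwinding of the direct-sum decomposition. I expect this identification to be either explicit in \cite{CD} or an immediate consequence of the uniqueness of weak-$*$ continuous extensions, in which case no serious additional work is required.
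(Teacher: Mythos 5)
Your proposal is correct and is essentially the paper's own route: the paper simply cites Corollary 4.3 of \cite{CD}, whose content is precisely the unwinding of the bidual decomposition $\A_d^{**}\cong \M_d\oplus\fW$ from Theorem \ref{T:dual} that you carry out. Moreover, the ``obstacle'' you flag is not actually needed for the stated conclusion: since $E_1\widehat f=(\iota_1(f),0)$ and $E_2\widehat f=(0,\iota_2(f))$, the functionals $\Phi_a$ and $\Phi_s$ correspond under $\Theta_*$ to $(\phi_a,0)$ and $(0,\phi_s)$ no matter what $\iota_1$ is, so $\Phi_a$ lies in the $\M_{d*}$-summand (hence extends weak-$*$ continuously to $\M_d$ by the remark following Theorem \ref{T:dual}) and $\Phi_s\in\fW_*$; the identification of $\iota_1$ with the natural inclusion is indeed built into the construction of $\Theta$ in \cite{CD}, so your sharper formula $\Phi_a(f)=\phi_a(f)$ is also valid.
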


\begin{proof}
This is contained in Corollary 4.3 (and its proof) from \cite{CD}.
\end{proof}

We now present a decomposition result for contractive representations of $\A_d$.

\begin{lemma}\label{L:decomprep}
Let
\[
\rho:\A_d\to B(\H)
\]
be a contractive homomorphism. Then, 
\[
\rho(f)=\widehat{\rho}(E_1 \widehat{f})\oplus \widehat{\rho}(E_2 \widehat{f}) \qfor  f\in \A_d.
\]
Moreover, if $\Phi$ is a weak-$*$ continuous functional, then the map
\[
f\mapsto \Phi(\widehat{\rho}(E_1\widehat{f})), \quad  f\in \A_d
\]
extends to be weak-$*$ continuous on $\M_d$ while
\[
f\mapsto \Phi(\widehat{\rho}(E_2\widehat{f}),  \quad  f\in \A_d
\]
belongs to $\fW_*$.
\end{lemma}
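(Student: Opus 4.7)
The plan is to lift $\rho$ to its unique weak-$*$ continuous multiplicative extension $\widehat{\rho}:\A_d^{**}\to B(\H)$ (which is provided by the recollection stated just before the lemma) and then exploit the decomposition $\A_d^{**}\cong \M_d\oplus \fW$ furnished by Theorem \ref{T:dual}. The crucial preliminary observation is that $\A_d^{**}$ is \emph{commutative}: both summands $\M_d$ and $\fW$ are commutative, and $\Theta$ is an algebra isomorphism. Consequently $E_1$ and $E_2$ are central orthogonal idempotents of norm one summing to $1$, so no Arens-product subtleties arise in what follows.

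For the direct sum identity, I would apply $\widehat{\rho}$ to the trivial decomposition $\widehat{f}=E_1\widehat{f}+E_2\widehat{f}$ to obtain $\rho(f)=\widehat{\rho}(E_1\widehat{f})+\widehat{\rho}(E_2\widehat{f})$. The operators $P_k:=\widehat{\rho}(E_k)$ are contractive idempotents on $\H$, hence orthogonal projections; since $\rho$ is unital they sum to $I_{\H}$ and satisfy $P_1P_2=0$, so $\H=P_1\H\oplus P_2\H$. Commutativity of $\A_d^{**}$ together with multiplicativity of $\widehat{\rho}$ then gives $P_k\widehat{\rho}(E_j\widehat{f})=\widehat{\rho}(E_j\widehat{f})P_k=\delta_{jk}\widehat{\rho}(E_j\widehat{f})$, so each summand acts nontrivially only on its own component and the decomposition is a genuine orthogonal direct sum.

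For the remaining assertions, I would define $\Psi_k:\A_d^{**}\to \bC$ by $\Psi_k(a)=\Phi(\widehat{\rho}(E_k a))$. Each $\Psi_k$ is weak-$*$ continuous as a composition of three weak-$*$ continuous maps: left multiplication by $E_k$ on $\A_d^{**}$, whose weak-$*$ continuity is provided by \cite{CER} (as invoked directly before Lemma \ref{L:Phias}); the extension $\widehat{\rho}$; and the functional $\Phi$. Under $\Theta$, the map $a\mapsto E_1 a$ corresponds to the projection $\M_d\oplus\fW\to \M_d$, and the first coordinate of $\Theta(\widehat{f})$ is the multiplier $M_f$; hence $\Psi_1$ descends to a weak-$*$ continuous functional on $\M_d$ whose restriction to $\A_d$ coincides with $f\mapsto \Phi(\widehat{\rho}(E_1\widehat{f}))$. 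Symmetrically, $\Psi_2$ factors through $E_2\A_d^{**}\cong \fW$ and yields an element of $\fW_*$ which, when pulled back to $\A_d^*$ via the embedding used in Lemma \ref{L:Phias}, equals $f\mapsto \Phi(\widehat{\rho}(E_2\widehat{f}))$. The only real obstacle is bookkeeping: one must keep the identifications provided by $\Theta$ consistently in view, after which every verification reduces to a routine unwinding of definitions.
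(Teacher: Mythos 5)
Your proposal is correct and matches the paper's argument in essentials: both hinge on the observation that $Q_k=\widehat{\rho}(E_k)$ are contractive idempotents, hence self-adjoint projections commuting with the range of $\widehat{\rho}$, which gives the direct sum formula. For the two functional statements the paper simply applies Lemma \ref{L:Phias} to $\Phi\circ\rho$ (whose unique weak-$*$ continuous extension is $\Phi\circ\widehat{\rho}$), whereas you unwind the same content through $\Theta$, tacitly using the identification from \cite{CD} that the first coordinate of $\Theta\widehat{f}$ is $M_f$; note also that your step $P_1+P_2=I_{\H}$ uses unitality of $\rho$, which is not among the stated hypotheses, though it is harmless in the intended application.
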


\begin{proof}
Let $Q_k= \widehat{\rho}(E_k)$ for $k=1,2$. Note that $Q_1Q_2=Q_2Q_1=0$. Also, each $Q_k$ is a contractive idempotent, and thus a self-adjoint projection. Moreover, $Q_k$ commutes with $\widehat{\rho}(\A_d^{**})$ and hence $Q_k \H$ is a reducing subspace for $\widehat{\rho}(\A_d^{**})$. Since $\widehat{\rho}$ is multiplicative we find 
\[
\rho(f)=Q_1\widehat{\rho}(E_1\widehat{f})Q_1\oplus Q_2\widehat{\rho}(E_2 \widehat{f})Q_2=\widehat{\rho}(E_1 \widehat{f})\oplus \widehat{\rho}(E_2 \widehat{f})
\]
for each $f\in \A_d$.  The last two statements follow from Lemma \ref{L:Phias}.
\end{proof}

Specializing the previous lemma to the case of the functional calculus associated to a commuting row contraction will yield the  main result of this section. One last bit of preparation is necessary. Comparing statements, we see that the following result is similar in spirit to Theorem 1.3 from \cite{eschmeier97}. 

\begin{theorem}\label{T:Wunitary}
If we put
\[
u_k=E_2 \widehat{z_k} \qfor  1\leq k \leq d
\]
then $\fW=W^*(u_1,\ldots,u_d)$ and $u=(u_1,\ldots,u_d)$ is a spherical unitary.
\end{theorem}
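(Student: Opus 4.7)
The plan is to exhibit a unital $\ast$-homomorphism $\pi: C(\bS_d) \to \fW$ satisfying $\pi(z_k) = u_k$ for $1 \leq k \leq d$. Granting this, both conclusions follow readily: applying $\pi$ to the identities $\sum_k z_k \bar z_k = 1$, $z_j z_k = z_k z_j$, and the normality of each $z_k$ on $\bS_d$ yields that $u = (u_1, \ldots, u_d)$ is a spherical unitary in $\fW$. Moreover, $\pi(C(\bS_d))$ is contained in $W^*(u_1, \ldots, u_d)$ by Stone--Weierstrass, and it is weak-$\ast$ dense in $\fW$: if $\Phi \in \fW_*$ annihilates $\pi(C(\bS_d))$, then the representing measure $\mu_\Phi$ from Theorem \ref{T:measures}(ii) satisfies $\int f \, d\mu_\Phi = 0$ for every $f \in C(\bS_d)$, forcing $\mu_\Phi = 0$ and hence $\Phi = 0$; therefore $\fW = W^*(u_1, \ldots, u_d)$.

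To construct $\pi$, I combine Theorems \ref{T:measures}(ii) and \ref{T:bands} to identify $\fW_*$ isometrically with the band $M_{TS} \subset M(\bS_d)$ of $\A_d$--totally singular measures, via $\Phi \mapsto \mu_\Phi$. The band decomposition $M(\bS_d) = M_{TS} \oplus_1 M_H$ dualizes to a decomposition of commutative von Neumann algebras
\[
C(\bS_d)^{**} = M(\bS_d)^* = M_{TS}^* \oplus_\infty M_H^*,
\]
and the projection onto the summand $M_{TS}^*$ is a unital $\ast$-homomorphism (multiplication by a central projection). The pre-dual identification $\fW_* \cong M_{TS}$ dualizes to a unital isometric Banach-space isomorphism $M_{TS}^* \cong \fW$, which is in turn a $\ast$-isomorphism by Kadison's theorem (a unital surjective isometry between unital $C^*$-algebras is a Jordan $\ast$-isomorphism, and in the commutative setting this yields a $\ast$-isomorphism). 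I then define $\pi$ as the composition
\[
C(\bS_d) \hookrightarrow C(\bS_d)^{**} \twoheadrightarrow M_{TS}^* \cong \fW,
\]
which is a unital $\ast$-homomorphism. The identification $\pi(z_k) = u_k$ is checked by pairing both sides with an arbitrary $\Phi \in \fW_*$: the construction of $\pi$ gives $\langle \pi(z_k), \Phi \rangle = \int z_k\, d\mu_\Phi$, while Lemma \ref{L:Phias} (noting that $\Phi_s = \Phi$ for $\Phi \in \fW_*$) combined with Theorem \ref{T:measures}(ii) gives $\langle u_k, \Phi \rangle = \widehat{\Phi}(E_2 \widehat{z_k}) = \Phi(z_k) = \int z_k\, d\mu_\Phi$, so the two elements of $\fW$ agree.

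The main obstacle is upgrading the Banach-space identification $\fW \cong M_{TS}^*$ coming from Theorem \ref{T:measures}(ii) to an identification of the $\ast$-algebra structures of these two commutative von Neumann algebras. Kadison's theorem applied to this unital isometry resolves the issue cleanly and bypasses a much more intricate direct analysis of how products in $\fW$ correspond to operations on representing measures on $\bS_d$.
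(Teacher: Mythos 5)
Your proposal is correct, but it takes a genuinely different route from the paper. The paper's proof first notes that $\fW=W^*(u_1,\dots,u_d)$ is immediate from $\fW=E_2\A_d^{**}$, then observes that $u$ is a row contraction (the canonical embedding $\A_d\to\A_d^{**}$ is completely isometric), so its joint spectrum lies in $\ol{\bB_d}$, and finally rules out mass in the open ball: after a separable reduction writing $\fW$ as a direct sum of algebras $L^\infty(X_\alpha,\mu_\alpha)$, it shows that cutting each spectral measure down to a Borel subset of $\bB_d$ yields a functional that is simultaneously in $\fW_*$ and $\A_d$--Henkin (by dominated convergence), hence zero. You instead manufacture a unital $*$-homomorphism $\pi:C(\bS_d)\to\fW$ with $\pi(z_k)=u_k$ by dualizing the Henkin/totally-singular band decomposition of $M(\bS_d)$ and invoking Kadison's theorem; this delivers both conclusions at once and in fact a stronger structural statement, namely that $\fW$ is $*$-isomorphic to the band summand $M_{TS}^*$ of $C(\bS_d)^{**}$ with $u_k$ corresponding to $z_k$. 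The cost is heavier machinery (Kadison's theorem, plus the M-structure fact that band projections of $M(\bS_d)$ are implemented by central projections of $C(\bS_d)^{**}$, the analogue of the \cite{CER} fact the paper uses for $\A_d^{**}$) and a heavier reliance on Theorem \ref{T:bands}. Two compressed steps should be made explicit: first, that $\Phi\mapsto\mu_\Phi$ is well defined and isometric, i.e.\ a totally singular measure annihilating $\A_d$ must vanish --- this holds because such a measure induces the zero functional, which lies in $\M_{d*}$, so the measure is also $\A_d$--Henkin and the uniqueness clause of Theorem \ref{T:bands} forces it to be zero; this F.~and~M.~Riesz-type content is precisely what legitimizes your identification $\fW_*\cong M_{TS}$. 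Second, that the surjective isometry $\fW\cong M_{TS}^*$ is unital (the image of $1_\fW=E_2$ pairs with $\mu\in M_{TS}$ to give $\widehat{\Phi_\mu}(E_2)=\Phi_\mu(1)=\mu(\bS_d)$), since unitality is what Kadison's theorem requires. With these details supplied your argument is complete; the paper's argument remains the more elementary one, needing only the complete isometry of the canonical embedding, the separable decomposition of $\fW$, and dominated convergence.
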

\begin{proof}
Note that $\fW=E_2 \A_d^{**}$. Since the algebra $\A_d^{**}$ is the weak-$*$ closed algebra generated by $\{\widehat{z_k}:1\leq k \leq d\}$, it follows that $\fW=W^*(u_1,\ldots,u_d)$. We now turn to showing that $u$ is a spherical unitary.
Since the $d$-shift is a row contraction and the canonical embedding $\A_d\to \A_d^{**}$ is completely isometric (Proposition 1.4.1 in \cite{blecherlemerdy}), it follows that $u$ is a row contraction. Therefore the joint spectrum $X$ of $u$ is contained in $\ol{\bB_d}$. 
We will show that the spectrum is disjoint from $\bB_d$.

Suppose that $\A_d^{**}$ is represented completely isometrically as a subalgebra of some $B(\H)$. While the Hilbert space $\H$ may not be separable, we can find a decomposition
\[
\H=\bigoplus_{\alpha}\H_{\alpha}
\]
into separable reducing subspaces for $\fW$ such that
\[
\fW=\bigoplus_{\alpha}\fW_\alpha
\]
and each $\fW_{\alpha}$ acts on a separable space. Let $P_{\alpha}\in \fW'$ be the orthogonal projection onto $\H_{\alpha}$. It suffices to show that the joint spectrum of each $P_{\alpha}u$ lies on the sphere. Now, we see that
\[
\fW_{\alpha}=W^*(P_{\alpha}u_1,\ldots,P_{\alpha}u_d)
\]
and thus there is a weak-$*$ homeomorphic $*$-isomorphism 
\[
\Theta_{\alpha}: \fW_{\alpha}\to L^\infty(X_{\alpha},\mu_{\alpha})
\]
where $X_{\alpha}\subset \ol{\bB_d}$ is the joint spectrum of $u_{\alpha}$ while the Borel probability measure $\mu_{\alpha}$ is its spectral measure. Moreover,  $\Theta_{\alpha}(P_{\alpha}u_k)=z_k$ for each $1\leq k \leq d$.

Let $\Omega\subset \bB_d$ be a Borel subset. Let
 \[
 \Phi_{\alpha}: \fW\to \bC
 \]
be defined as
\[
\Phi_{\alpha}(w)=\int_{\bB_d}\Theta_{\alpha}(P_{\alpha}w)\chi_{\Omega}\,d\mu_{\alpha}
\]
for each $w\in \fW$. Note that 
\[
\chi_\Omega \mu_{\alpha}\in L^\infty(X_{\alpha},\mu_{\alpha})_*
\]
and thus $\Phi_{\alpha}$ is seen to be weak-$*$ continuous, so that  $\Phi_{\alpha}\in \fW_*$.
On the other hand, if $f$ is a polynomial we have
\begin{align*}
\Phi_{\alpha}(E_2 \widehat{f})&=\int_{\bB_d}f(\Theta_{\alpha}(P_{\alpha}u_1),\ldots,\Theta_{\alpha}(P_{\alpha}u_d))\chi_{\Omega}\,d\mu_{\alpha}\\
&=\int_{\bB_d}f\chi_{\Omega}\,d\mu_{\alpha}.
\end{align*}
In fact, this equality is easily seen to hold for $f\in \A_d$. Since $\Omega\subset \bB_d$, a straightforward verification based on the dominated convergence theorem shows that 
\[
f\mapsto \Phi_{\alpha}(E_2 \widehat{f})
\]
is an $\A_d$--Henkin functional. Since this map was also shown to be in $\fW_*$, we conclude that it is the zero map on $\A_d$. Hence
\begin{align*}
\mu_{\alpha}(\Omega)=\Phi_{\alpha}(E_2 \widehat{1})=0.
\end{align*}
Since $\Omega\subset \bB_d$ was arbitrary, we conclude that $X_{\alpha}\subset \bS_d$, and because $\alpha$ was arbitrary this implies that the joint spectrum of $u$ is contained in $\bS_d$.
\end{proof}


An alternate proof of this  theorem can be given by constructing explicit bounded nets which converge to each $u_k^*$,
by approximating the functions $\ol{z_k}$ on $\A_d$-totally null subsets of $\bS_d$ and simultaneously converging
to 0 uniformly on compact subsets of $\bB_d$. 
The construction is technical and relies on the interpolation result \cite[Proposition~5.7]{CD}. On the other hand, it does provide intuition as to why $\psi(1 - \sum_k u_k u_k^*) = 0$ for all $\psi\in\fW_*$. 
In turn, this leads to the conclusion that $\sum_k u_k u_k^* = 1$, which shows that $u=(u_1,\ldots,u_d)$ is a spherical unitary.

We make the following definition which is complementary to absolute continuity.

\begin{defn}\label{D:TS}
A commuting row contraction $T = (T_1,\dots,T_d)$ on a Hilbert space $\H$ is called \emph{totally singular} if for every weak-$*$ continuous linear functional $\Phi$ on $B(\H)$, the functional
\[
 f\mapsto \Phi(f(T)),\quad  f\in \A_d
 \]
belongs to $\fW_*$. 
\end{defn}

Equivalently, $T$ is totally singular whenever  
\[
 f\mapsto \Phi(f(T)), \quad  f\in \A_d
 \]
belongs to $\fW_*$ for every weak-$*$ continuous functional $\Phi$ on $W^*(T_1,\ldots,T_d)$.

We will give an alternative characterization of totally singular commuting row contractions after the next theorem, which is the main result of this section.
Similar statements were established using different tools by Eschmeier  \cite[Theorem 1.5]{eschmeier97} under stronger assumptions on the commuting row contraction $T$. 

\begin{thm} \label{T:AS}
Let $T = (T_1,\dots,T_d)$ be a commuting row contraction.
Then, we can write $T=A\oplus S$ where
\begin{enumerate}
\item[\rm{(i)}] 
$A=(A_1,\dots,A_d)$ is absolutely continuous, 
\item[\rm{(ii)}] 
$S=(S_1,\dots,S_d)$  is a totally singular spherical unitary.
\end{enumerate}
\end{thm}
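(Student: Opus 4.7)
The plan is to apply Lemma \ref{L:decomprep} directly to the canonical functional calculus $\Phi_T:\A_d\to B(\H)$ and read off the decomposition from the projections $Q_k := \widehat{\Phi_T}(E_k)$ for $k=1,2$ that arise in its proof. These $Q_k$ are complementary self-adjoint projections and commute with every $T_j$ (because they commute with the entire image $\widehat{\Phi_T}(\A_d^{**})$), so the subspaces $\H_k := Q_k\H$ reduce $T$ and we obtain $T=A\oplus S$ by setting $A_j=T_j|_{\H_1}$ and $S_j=T_j|_{\H_2}$. From the first assertion of Lemma \ref{L:decomprep}, the functional calculi of $A$ and $S$ are given by $f\mapsto \widehat{\Phi_T}(E_1\widehat{f})|_{\H_1}$ and $f\mapsto \widehat{\Phi_T}(E_2\widehat{f})|_{\H_2}$.

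For part (i), I would use the characterization of absolute continuity in the remark following Theorem \ref{T:measures}: it suffices to check that $\Phi_A(f_n)\to 0$ weak-$*$ in $B(\H_1)$ whenever $\{f_n\}\subset \A_d$ converges to $0$ weak-$*$ in $\M_d$. Any weak-$*$ continuous $\psi$ on $B(\H_1)$ extends to a weak-$*$ continuous functional $\Phi$ on $B(\H)$ via $\Phi(X)=\psi(Q_1XQ_1|_{\H_1})$ (the corner map $X\mapsto Q_1XQ_1|_{\H_1}$ is normal). Lemma \ref{L:decomprep} then gives that $f\mapsto \Phi(\widehat{\Phi_T}(E_1\widehat{f}))=\psi(\Phi_A(f))$ is the restriction to $\A_d$ of a weak-$*$ continuous functional on $\M_d$, hence $\psi(\Phi_A(f_n))\to 0$, yielding absolute continuity of $A$.

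For part (ii), the totally singular property is the dual statement. For any weak-$*$ continuous $\psi$ on $B(\H_2)$, the same extension trick combined with the last assertion of Lemma \ref{L:decomprep} shows that $f\mapsto \psi(\Phi_S(f))$ lies in $\fW_*$, matching Definition \ref{D:TS}. To see that $S$ is a spherical unitary, restrict $\widehat{\Phi_T}$ to $\fW=E_2\A_d^{**}$: this is a weak-$*$ continuous unital completely contractive homomorphism from the commutative von Neumann algebra $\fW$ into $Q_2B(\H)Q_2=B(\H_2)$, and it is therefore a $*$-homomorphism by the standard fact that unital completely contractive homomorphisms of commutative $C^*$-algebras preserve adjoints. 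Theorem \ref{T:Wunitary} supplies a spherical unitary $u=(u_1,\dots,u_d)\in \fW$ with $u_k=E_2\widehat{z_k}$, so its image is a spherical unitary in $B(\H_2)$; the computation $\widehat{\Phi_T}(u_k)|_{\H_2}=Q_2T_k|_{\H_2}=S_k$ identifies this image as $S$.

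The main technical point is the $*$-homomorphism property of $\widehat{\Phi_T}|_\fW$, since everything else is routine bookkeeping: Lemma \ref{L:decomprep} directly provides both the orthogonal splitting $\H=\H_1\oplus\H_2$ and the two functional calculus properties for the summands, while Theorem \ref{T:Wunitary} upgrades the second summand to a spherical unitary.
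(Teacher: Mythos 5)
Your proposal is correct and takes essentially the same route as the paper's proof: apply Lemma \ref{L:decomprep} to $\Phi_T$ to obtain the splitting, observe that $\widehat{\Phi_T}|_{\fW}$ is a unital contractive homomorphism of a commutative von Neumann algebra and hence a $*$-homomorphism so that Theorem \ref{T:Wunitary} makes $S$ a spherical unitary, and use the two functional statements of the lemma (together with the remark following Theorem \ref{T:measures}) for absolute continuity of $A$ and total singularity of $S$. The only difference is that you spell out the corner-functional extension $\Phi(X)=\psi(Q_kXQ_k|_{\H_k})$, which the paper leaves implicit.
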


\begin{proof}
Let $\rho:\A_d\to B(\H)$ be the unital completely contractive homomorphism defined by $\rho(f) = f(T)$ for every $f\in \A_d$. By Lemma \ref{L:decomprep}, we can write
\[
T_k=A_k\oplus S_k \qfor 1\leq k \leq d
\]
where 
\[
f(A_k)=\widehat{\rho}(E_1 \widehat{f})  \qand f(S_k)=\widehat{\rho}(E_2 \widehat{f}) 
\]
for  $f\in \A_d$ and $1\leq k \leq d$.
Now, $\widehat{\rho}|_{\fW}$ is a unital contractive homomorphism on the von Neumann algebra $\fW$, hence a $*$-homomorphism. In particular, 
$S=(S_1,\ldots,S_d)$ is a spherical unitary by Theorem \ref{T:Wunitary}, and furthermore it is totally singular by Lemma \ref{L:decomprep}. Finally, the fact that $A$ is absolutely continuous is a consequence of Lemma \ref{L:decomprep} and the remark following Theorem \ref{T:measures}.

\end{proof}

We make a few remarks. First, note that Theorem \ref{T:abscontdilation} implies that the operator $A$ in the theorem above has a co-extension of the form $M_z^{(\kappa)}\oplus U$ where $U$ is an absolutely continuous spherical unitary. Furthermore,
in the case where the commuting row contraction $T$ above acts on a separable space, the spherical unitary $S$ has a spectral measure which must be $\A_d$--totally singular.  In fact, we establish a full counterpart to Lemma \ref{L:spmeasure} for totally singular commuting row contractions.

\begin{corollary}\label{C:spmeasuresing}
Let $T=(T_1,\ldots,T_d)$ be a commuting row contraction on some separable Hilbert space. Then, $T$ is totally singular if and only if $T$ is a spherical unitary with $\A_d$--totally singular spectral measure.
\end{corollary}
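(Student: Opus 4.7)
The sufficient direction should follow quickly from the band decomposition provided by Theorem \ref{T:bands}. Assume $T$ is a spherical unitary with $\A_d$-totally singular spectral measure $\mu$. Separability of the underlying Hilbert space yields a weak-$*$ homeomorphic $*$-isomorphism $W^*(T_1,\ldots,T_d)\to L^\infty(X,\mu)$ sending $T_k$ to $z_k$. Every weak-$*$ continuous functional on $W^*(T_1,\ldots,T_d)$ therefore arises from some $h\in L^1(\mu)$, and composing with the functional calculus yields $f\mapsto \int_X fh\,d\mu$ on $\A_d$, which is integration against the measure $h\mu$. Since $h\mu\ll \mu$, Theorem \ref{T:bands} shows that $h\mu$ is $\A_d$-totally singular, and so the resulting functional lies in $\fW_*$. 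The equivalent formulation recorded immediately after Definition \ref{D:TS} then yields that $T$ is totally singular.

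The converse is where the real work lies. The plan is to invoke Theorem \ref{T:AS} to decompose $T=A\oplus S$ with $A$ absolutely continuous and $S$ a totally singular spherical unitary, and then argue that $\H_A$ must be trivial. Given a weak-$*$ continuous functional $\Phi$ on $B(\H_A)$, extend it to $\tilde\Phi$ on $B(\H)$ by zero-padding the associated trace-class operator; since $f(T)=f(A)\oplus f(S)$, this yields $\tilde\Phi(f(T))=\Phi(f(A))$. Total singularity of $T$ places $f\mapsto \Phi(f(A))$ in $\fW_*$, while absolute continuity of $A$ combined with Theorem \ref{T:measures}(i) places the same functional in $\M_{d*}$. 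The disjointness $\M_{d*}\cap \fW_*=\{0\}$ noted earlier forces $\Phi(f(A))=0$ for every $f\in \A_d$ and every such $\Phi$. Taking $f=1$ and letting $\Phi$ range over a separating family of trace-class functionals yields $I_{\H_A}=0$, so $\H_A=\{0\}$ and $T=S$ is a spherical unitary.

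It then remains to check that the spectral measure $\mu$ of $T=S$ is $\A_d$-totally singular. Using the $*$-isomorphism $W^*(T_1,\ldots,T_d)\cong L^\infty(X,\mu)$ once more, the constant function $1\in L^1(\mu)$ induces the weak-$*$ continuous functional $g\mapsto \int_X g\,d\mu$, whose composition with the functional calculus is $f\mapsto \int_X f\,d\mu$. By total singularity of $T$ this lies in $\fW_*$, which is exactly the assertion that $\mu$ is $\A_d$-totally singular. The main conceptual obstacle is the step that annihilates $\H_A$: it rests on the orthogonality of $\M_{d*}$ and $\fW_*$ inside $\A_d^*$ supplied by Theorem \ref{T:dual}, whereas the remaining pieces are routine applications of the abelian spectral theorem combined with Theorem \ref{T:bands}.
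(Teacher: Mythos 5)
Your proposal is correct and follows essentially the same route as the paper: Theorem \ref{T:AS} plus the fact that only the zero functional is simultaneously $\A_d$--Henkin and $\A_d$--totally singular for the necessity, and the spectral theorem together with Theorem \ref{T:bands} for the sufficiency. The only difference is cosmetic: you spell out, via the zero-padded trace-class functionals, why the absolutely continuous summand must vanish, a step the paper leaves implicit in the phrase ``by Theorem \ref{T:AS}, $T$ must be a spherical unitary.''
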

\begin{proof}
Assume that $T$ is totally singular. By Theorem \ref{T:AS}, we see that $T$ must be a spherical unitary. Moreover, upon taking the functional $\Phi$ in Definition \ref{D:TS} to be integration against the spectral measure of $T$, we conclude that this measure must be $\A_d$--totally singular.

Conversely, assume that $T$ is a spherical unitary with $\A_d$--totally singular spectral measure. 
Let $\fM=W^*(T_1,\ldots,T_d)$. There is a weak-$*$ homeomorphic $*$-isomorphism
\[
\Theta: \fM\to L^\infty(X,\mu)
\] 
such that $\Theta(T_k)=z_k$,  where the probability measure $\mu$ on $X\subset \bS_d$ is the spectral measure of $T$. To show that $T$ is totally singular, it suffices to verify that
\[
f\mapsto \Phi(f(T))=\Phi\circ \Theta^{-1}(f)
\]
belongs to $\fW_*$ whenever $\Phi\in \fM_*$. In that case,  we see that $\Phi\circ \Theta^{-1}$ belongs to $L^\infty(X,\mu)_*$ and thus
\[
\Phi\circ \Theta^{-1}(f)=\int_{X}fh\,d\mu \qfor  f\in \A_d
\]
for some $h\in L^1(\mu)$. Since $\mu$ is $\A_d$--totally singular, so is $h\mu$ by Theorem \ref{T:bands}, so that $\Phi\circ \Theta^{-1}\in \fW_*$ and the proof is complete.
\end{proof}

We close this section with an analogue of Theorem 1.3 from \cite{eschmeier97} showing that the functional calculus associated to a totally singular commuting row contraction extends to a $*$-homomorphism.

\begin{cor}\label{C:*hom}
Let $T=(T_1,\dots,T_d)$ be a totally singular commuting row contraction on some Hilbert space $\H$. Then, there is a $*$-homomorphism $\pi:C(\bS_d)\to B(\H)$ such that $\pi(f)=f(T)$ for each $f\in \A_d$.
\end{cor}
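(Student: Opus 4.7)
The plan is to first reduce to the case where $T$ is itself a spherical unitary, and then to construct $\pi$ from the standard continuous functional calculus for commuting normal operators.

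For the reduction, I would apply Theorem \ref{T:AS} to decompose $T = A \oplus S$ on $\H = \H_A \oplus \H_S$, where $A$ is absolutely continuous and $S$ is a spherical unitary, and argue that $\H_A = \{0\}$. Given $\xi, \eta \in \H_A$, the functional $\Phi(X) = \langle X(\xi \oplus 0), \eta \oplus 0\rangle$ on $B(\H)$ is weak-$*$ continuous, so total singularity of $T$ puts the map $f \mapsto \Phi(f(T)) = \langle f(A)\xi, \eta\rangle$ in $\fW_*$. On the other hand, absolute continuity of $A$ makes this same functional extend weak-$*$ continuously to $\M_d$, placing it in $\M_{d*}$. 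Since Theorem \ref{T:dual} exhibits $\A_d^*$ as an $\ell^1$ direct sum $\M_{d*} \oplus_1 \fW_*$, the only functional in both summands is zero. Hence $\langle f(A)\xi, \eta\rangle = 0$ for all $f \in \A_d$; taking $f = 1$ gives $\H_A = \{0\}$, so $T$ is a spherical unitary.

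Once $T$ is known to be a spherical unitary, its joint spectrum $\sigma(T)$ is a compact subset of $\bS_d$, and the spectral theorem for commuting normal operators supplies a unital $*$-homomorphism $\pi_0 : C(\sigma(T)) \to B(\H)$ with $\pi_0(z_k|_{\sigma(T)}) = T_k$. Composing with the restriction map $C(\bS_d) \to C(\sigma(T))$ yields a unital $*$-homomorphism $\pi : C(\bS_d) \to B(\H)$ that agrees with the polynomial functional calculus on polynomials.

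To finish, I need to verify that $\pi(f) = f(T)$ for every $f \in \A_d$, which is the subtle point of the argument: a priori $\pi$ and $\Phi_T$ are built from genuinely different functional calculi. This will be resolved via the inequality $\|g\|_\infty \leq \|g\|_{\M_d}$ recalled in Section \ref{S:prelim}. Given $f \in \A_d$, pick polynomial approximants $p_n \to f$ in the multiplier norm; then $(p_n)$ is uniformly Cauchy on $\ol{\bB_d}$ and in particular identifies $f$ with a continuous function on $\bS_d$. Since the $*$-homomorphism $\pi$ is contractive in the $C(\bS_d)$ norm, $\pi(p_n) \to \pi(f)$, while $\Phi_T(p_n) \to f(T)$ by contractivity of $\Phi_T$ in the multiplier norm. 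Because $\pi(p_n) = p_n(T) = \Phi_T(p_n)$ by construction, the desired equality $\pi(f) = f(T)$ follows by passing to the limit.
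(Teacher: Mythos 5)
Your proof is correct and follows essentially the same route as the paper: reduce to the case of a spherical unitary (the paper cites Corollary \ref{C:spmeasuresing}, whose proof is precisely the application of Theorem \ref{T:AS} that you spell out, including the argument that the absolutely continuous summand must vanish because $\M_{d*}\cap\fW_*=\{0\}$) and then take the continuous functional calculus. Your extra details --- the explicit verification that $\H_A=\{0\}$ and the density argument identifying $\pi$ with $\Phi_T$ on $\A_d$ via $\|\cdot\|_\infty\leq\|\cdot\|_{\M_d}$ --- are elaborations of steps the paper treats as immediate, with the minor bonus that arguing directly from Theorem \ref{T:AS} avoids the separability hypothesis appearing in Corollary \ref{C:spmeasuresing}.
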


\begin{proof}
By Corollary \ref{C:spmeasuresing}, we see that $T$ is a spherical unitary. We can thus simply take $\pi$ to be the continuous functional calculus for $T$.
\end{proof}

\section{Completely non-unitary commuting row contractions}

Every contraction $T$ acting on a separable Hilbert space has a decomposition into a completely non-unitary part and a unitary part. The completely non-unitary part is well-known to be absolutely continuous as mentioned in the introduction (Theorem II.6.4 in \cite{NagyFoias}). The unitary part can be further decomposed using the Lebesgue decomposition of its spectral measure into an absolutely continuous unitary and a ``singular" one. Thus
\[ T = T_{cnu} \oplus U_a \oplus U_s .\]
The decomposition of Theorem~\ref{T:AS} in the case $d=1$ thus yields $A = T_{cnu} \oplus U_a$ and $S = U_s$.
In this short section, we discuss the multivariate counterpart to this argument.

A commuting row contraction $T=(T_1,\ldots,T_d)$ is said to be completely non-unitary if it has no invariant subspace on which it restricts to a spherical unitary. The next result is folklore, but we provide the details as a proof of it is not easy to track down in the literature (\cite{eschmeier95}, \cite[Exercise 20.2.6]{DAELW}). We are indebted to J\"org Eschmeier for sharing the following proof with us \cite{eschmeier2015}.

\begin{theorem}\label{T:CNU}
Let $T=(T_1,\ldots,T_d)$ be a commuting row contraction on some Hilbert space $\H$. Then, there exist unique reducing subspaces $\H_{\text{cnu}}$ and $\H_u$ for $T$ such that $\H = \H_{\text{cnu}} \oplus \H_u$, $T|_{\H_{\text{cnu}}}$ is completely non-unitary and $T|_{\H_u}$ is a spherical unitary.
\end{theorem}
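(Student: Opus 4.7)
The plan is to let $\H_u$ be the closed linear span of all subspaces of $\H$ that reduce $T$ and on which $T$ restricts to a spherical unitary, and set $\H_{\text{cnu}} = \H \ominus \H_u$. The crux of the argument is a preliminary lemma asserting that any subspace $\M \subset \H$ which is merely invariant for each $T_k$ and on which $T|_\M \in B(\M)^d$ is a spherical unitary is automatically \emph{reducing} for $T$. This is the mechanism that allows the maximal reducing spherical-unitary part to also absorb every invariant spherical-unitary part, which is what the definition of completely non-unitary demands.

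To prove this key lemma, fix $x \in \M$ and denote by $(T_k|_\M)^*$ the adjoint of $T_k|_\M$ inside $B(\M)$; one has $(T_k|_\M)^* = P_\M T_k^*|_\M$, where $P_\M$ is the orthogonal projection of $\H$ onto $\M$. The spherical unitary identity $\sum_k (T_k|_\M)(T_k|_\M)^* = I_\M$ evaluated at $x$ yields
\[
\sum_{k=1}^d \|P_\M T_k^* x\|^2 = \|x\|^2 .
\]
The row contraction condition gives $\sum_k \|T_k^* x\|^2 \leq \|x\|^2$, while obviously $\|P_\M T_k^* x\| \leq \|T_k^* x\|$. Forcing equality in every inequality yields $P_\M T_k^* x = T_k^* x$ for each $k$, so $T_k^* x \in \M$, and $\M$ reduces $T$.

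Given this, I would then show that the family of subspaces reducing $T$ and carrying a spherical unitary restriction is closed under taking closed linear spans: if $\K_\alpha$ and $\K_\beta$ reduce $T$ with $T|_{\K_\alpha}, T|_{\K_\beta}$ spherical unitaries, then $\overline{\K_\alpha + \K_\beta}$ reduces $T$, and the operator identities $T_k T_k^* = T_k^* T_k$ and $\sum_k T_k T_k^* = I$ hold on the dense subspace $\K_\alpha + \K_\beta$ and hence on the closure. A Zorn's lemma argument yields a maximal such subspace $\H_u$, and by construction any reducing subspace supporting a spherical unitary sits inside $\H_u$.

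Finally, $\H_{\text{cnu}} := \H \ominus \H_u$ reduces $T$. If $\M \subset \H_{\text{cnu}}$ were $T$-invariant with $T|_\M$ a spherical unitary, then $\M$ is invariant for $T$ in $\H$, so the key lemma makes it reducing, whence $\M \subset \H_u \cap \H_{\text{cnu}} = \{0\}$. For uniqueness, any decomposition $\H = \M' \oplus \N'$ into reducing subspaces with $T|_{\M'}$ completely non-unitary and $T|_{\N'}$ a spherical unitary forces $\N' \subset \H_u$ by maximality of $\H_u$, while $\H_u \cap \M'$ would be a reducing subspace of $\M'$ supporting a spherical unitary, hence trivial; so $\N' = \H_u$. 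I expect the key lemma to be the main conceptual obstacle, since it is the non-obvious rigidity statement that makes the row contraction inequality do the work; once it is in hand, the rest is bookkeeping.
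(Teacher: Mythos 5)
Your proposal is correct, and while it produces the same decomposition as the paper (take $\H_u$ to be the largest reducing subspace carrying a spherical unitary and set $\H_{\text{cnu}}=\H\ominus\H_u$), the way you establish that $T|_{\H_u}$ is a spherical unitary is genuinely different. The paper only \emph{asserts} that every invariant subspace $\N$ with $T|_\N$ a spherical unitary is automatically reducing; your key lemma supplies exactly this rigidity statement, via the equality-forcing argument $\sum_k\|P_\M T_k^*x\|^2=\|x\|^2\geq\sum_k\|T_k^*x\|^2\geq\sum_k\|P_\M T_k^*x\|^2$, which is sound (and is the same trick the paper itself deploys later, e.g.\ in Theorem \ref{T:Hudilation} and in the final step of its own proof). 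Where the routes diverge is in getting normality of the restricted tuple on the closed span: the paper only transfers the two summed identities $\sum_k S_k^*S_k=I=\sum_k S_kS_k^*$ to $\H_u$ and then must invoke Athavale's theorem that spherical isometries extend to spherical unitaries, concluding the extension is a direct sum; you instead observe that the pointwise identities $T_kT_k^*x=T_k^*T_kx$ and $\sum_k T_kT_k^*x=x$ hold on each reducing piece, hence on algebraic sums by linearity and on the closure by continuity, so normality passes directly to $\H_u$ and no subnormality machinery is needed. Your argument is thus more self-contained and elementary, at the cost of being slightly longer; the Zorn's lemma step is an unnecessary detour (the closed span of \emph{all} such subspaces works at once, exactly as in your opening sentence, since the identities already hold on arbitrary finite sums), but it is not an error. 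Your uniqueness argument matches the paper's (which leaves it as ``straightforward''), and is fine once one notes that $\N'\subset\H_u$ gives $\H_u=\N'\oplus(\H_u\cap\M')$ and that the restriction of a spherical unitary to a reducing subspace is again a spherical unitary.
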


\begin{proof}
Let $\H_u$ be the smallest closed subspace of $\H$ containing all invariant subspaces $\N\subset \H$ for $T$ with the property that $T|_{\N}$ is a spherical unitary. Note that all such subspaces are necessarily reducing for $T$, and thus clearly $\H_u$ is reducing as well.
Let $\H_{\text{cnu}}=\H\ominus \H_u$. It is obvious that $T|_{\H_{\text{cnu}}}$ is completely non-unitary. Put $S=T|_{\H_u}$. It remains to check that $S$ is a spherical unitary. Observe that each subspace $\N$ on which $T$ restricts to a spherical unitary lies in the space
\[
 \ker\big(I-\sum_{k}T^*_k T_k\big)\ \cap\ \ker\big(I-\sum_{k}T_k T_k^*\big).
\]
Thus $\H_u$ lies in this intersection, which shows that $S$ satisfies
\begin{equation}\label{E:sphisom}
 \sum_k S_k^*S_k = I = \sum_k S_k S_k^* .
\end{equation}

By a classical result of Athavale \cite{athavale} (this also follows from Theorem \ref{T:arvdilation}), every spherical isometry extends to a spherical unitary. Now, $S$ is a spherical isometry by the first equality in (\ref{E:sphisom}), and hence has such an extension. But the second equality of (\ref{E:sphisom}) states that $S^*$ is also a spherical isometry, which forces the extension to be trivial (that is, a direct sum). Hence $S$ is a spherical unitary, and a fortiori, the operators $S_k$ are commuting normal operators.

Finally, the uniqueness of the decomposition is a straightforward consequence of the definitions of $\H_u$ and $\H_{\text{cnu}}$.
\end{proof}

Recall that  
\[
\lim_{N\to \infty}\sum_{|\alpha|=N} \frac{N!}{\alpha_1! \alpha_2!\cdots \alpha_d!}\|M_{z^\alpha}^{(\kappa)*}\xi\|^2 = 0
\]
for every $\xi\in (H^2_d)^{(\kappa)}$ \cite{Arv98}. The following alternate description of $\H_u$ is inspired by the proof of Proposition I.1.7 in \cite{bercovici} and that of Theorem 1 in \cite{eschmeier95}. It appears to be new in our context.

\begin{theorem}\label{T:Hudilation}
Let $T=(T_1,\ldots,T_d)$ be a commuting row contraction on some Hilbert space $\H$ with minimal co-extension $M_z^{(\kappa)}\oplus U$ acting on $\K_1\oplus \K_2$. 
Assume that $\H_u\subset \H$ is a reducing subspace for $T$ such that $T|_{\H_u}$ is a spherical unitary and $T|_{\H\ominus \H_u}$ is completely non-unitary.
Then
\[
\H_u=\K_2\ominus \bigvee_{\alpha\in \bN^d}U^{\alpha*}(\K_2\ominus (\H\cap \K_2)).
\]
\end{theorem}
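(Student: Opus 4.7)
The plan is to first show $\H_u \subset \K_2$, and then identify $\H_u$ with
\[
\N := \K_2 \ominus \bigvee_{\alpha \in \bN^d} U^{\alpha*}\bigl(\K_2 \ominus (\H \cap \K_2)\bigr) = \{h \in \K_2 : U^\alpha h \in \H \cap \K_2 \text{ for all } \alpha \in \bN^d\},
\]
where the second equality follows by moving $U^{\alpha*}$ across the inner product against an element of $\K_2 \ominus (\H \cap \K_2)$ and using that $\K_2$ reduces $U$.

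For the inclusion $\H_u \subset \K_2$, I would take $h \in \H_u$ and decompose $h = h_1 + h_2 \in \K_1 \oplus \K_2$. Co-invariance of $\H$ gives $T^{\alpha*} h = M_{z^\alpha}^{(\kappa)*} h_1 + U^{\alpha*} h_2$. The spherical unitary hypothesis on $\H_u$, combined with Fuglede--Putnam, yields $\sum_{|\alpha|=N} \frac{N!}{\alpha!} T^\alpha T^{\alpha*} = I$ on $\H_u$ for every $N$, hence $\sum_{|\alpha|=N} \frac{N!}{\alpha!} \|T^{\alpha*} h\|^2 = \|h\|^2$. Splitting this sum by orthogonality of $\K_1$ and $\K_2$, and using the analogous identity $\sum_{|\alpha|=N} \frac{N!}{\alpha!} \|U^{\alpha*} h_2\|^2 = \|h_2\|^2$ for the spherical unitary $U$, reduces it to $\sum_{|\alpha|=N} \frac{N!}{\alpha!} \|M_{z^\alpha}^{(\kappa)*} h_1\|^2 = \|h_1\|^2$ for every $N$. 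The asymptotic vanishing property of the $d$-shift recalled just before the theorem then forces $h_1 = 0$.

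To get $\H_u \subset \N$, take $h \in \H_u \subset \H \cap \K_2$. Since $h \in \K_2$, one has $T_k h = P_\H(U_k h)$, and Pythagoras gives $\|U_k h - T_k h\|^2 = \|U_k h\|^2 - \|T_k h\|^2$. Summing over $k$, both $U$ and $T|_{\H_u}$ being spherical unitaries force $\sum_k \|U_k h\|^2 = \|h\|^2 = \sum_k \|T_k h\|^2$, so $U_k h = T_k h \in \H_u$ for every $k$. Iterating yields $U^\alpha h \in \H_u \subset \H \cap \K_2$ for every $\alpha$, which is exactly $h \in \N$.

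For the reverse inclusion $\N \subset \H_u$, the set $\N$ is visibly invariant under each $U_k$ from its defining property. Invariance under $U_k^*$ rests on two observations: first, $\H \cap \K_2$ is $U_k^*$-invariant because $U_k^* h = T_k^* h \in \H$ for $h \in \H \cap \K_2$ by co-invariance of $\H$; second, Fuglede--Putnam applied to the commuting normals $U_j$ gives $U^\alpha U_k^* = U_k^* U^\alpha$, so $U^\alpha(U_k^* h) = U_k^*(U^\alpha h) \in \H \cap \K_2$ whenever $h \in \N$. Thus $\N$ reduces $U$, and $U|_\N$ inherits the spherical unitary structure. For $h \in \N$, $U_k h \in \H$ forces $T_k h = P_\H U_k h = U_k h$, so $T|_\N = U|_\N$ is a spherical unitary, and maximality of $\H_u$ yields $\N \subset \H_u$. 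The main obstacle is the first step: one needs to convert the abstract spherical unitary condition on $\H_u$ into a quantitative identity that, filtered through the co-extension decomposition, pits a constant against the Arveson decay of the $d$-shift to annihilate the $\K_1$ component.
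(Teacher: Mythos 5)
Your proof is correct and follows essentially the same route as the paper's: the decisive step, showing $\H_u\subset\K_2$ by playing the multinomial identity for the spherical unitary $T|_{\H_u}$ (and for $U$) against the Arveson decay of $M_z^{(\kappa)}$, is identical, and both arguments conclude by invoking the maximality/uniqueness of $\H_u$ from Theorem \ref{T:CNU}. The only difference is in bookkeeping: where you work with the concrete description $\N=\{h\in\K_2 : U^{\alpha}h\in\H\cap\K_2 \text{ for all } \alpha\}$, checking directly that it reduces $U$ (via Fuglede--Putnam) and obtaining $\H_u\subset\N$ from the Pythagoras identity forcing $U_k h=T_k h$ on $\H_u$, the paper handles both inclusions at once by identifying the right-hand side as the largest reducing subspace for $M_z^{(\kappa)}\oplus U$ contained in $\H\cap\K_2$ and showing $\H_u$ is such a reducing subspace.
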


\begin{proof}
Denote by $\H_0$ the space on the right-hand side of the desired equality. Note that 
\[
\H_0\subset  \K_2\ominus (\K_2\ominus (\H\cap \K_2)) = \H \cap \K_2 .
\]
By construction, $\H$ is co-invariant for $M_z^{(\kappa)}\oplus U$ and $\K_2$ is reducing. Therefore $\H \cap \K_2$ is co-invariant; whence $\K_2\ominus (\H\cap \K_2)$ is invariant. Since 
\[
(M_z^{(\kappa)}\oplus U)|_{\K_2} = U,
\]
we see that 
\[
\bigvee_{\alpha\in \bN^d}U^{\alpha*}(\K_2\ominus (\H\cap \K_2))
\]
is the smallest reducing subspace for $M_z^{(\kappa)}\oplus U$ containing $\K_2\ominus (\H\cap \K_2)$. 
It follows that $\H_0$ is the largest reducing subspace  for $M_z^{(\kappa)}\oplus U$ contained in $\H \cap \K_2$. 
In particular, $P_{\H_0}$ commutes with $M_z^{(\kappa)}\oplus U$.
Therefore 
\[ T P_{\H_0} = P_\H (M_z^{(\kappa)} \oplus U) P_{\H_0} = P_\H  P_{\H_0} (M_z^{(\kappa)} \oplus U) = P_{\H_0} U = UP_{\H_0}\]
and $T|_{\H_0}$ is a spherical unitary. This shows that $\H_0 \subset \H_u$ by uniqueness of $\H_u$ (see Theorem \ref{T:CNU} and its proof).

Next, we claim that $\H_u$ is contained in $\K_2$. To this end, let $\xi\in \H_u$. Then,
\begin{align*}
\|\xi\|^2&=\lim_{N\to \infty}\sum_{|\alpha|=N} \frac{N!}{\alpha_1! \alpha_2!\cdots \alpha_d!} \|T^{\alpha*}\xi\|^2\\
&=\lim_{N\to \infty}\sum_{|\alpha|=N} \frac{N!}{\alpha_1! \alpha_2!\cdots \alpha_d!} \left(\|M_{z^\alpha}^{(\kappa)*} P_{\K_1}\xi\|^2 + \|U^{\alpha*}P_{\K_2}\xi\|^2\right)\\
&=\lim_{N\to \infty}\sum_{|\alpha|=N} \frac{N!}{\alpha_1! \alpha_2!\cdots \alpha_d!}\|U^{\alpha*}P_{\K_2}\xi\|^2=\|P_{\K_2}\xi\|^2,
\end{align*}
so that $\xi\in \K_2$. This shows that $\H_u\subset \H\cap \K_2$. Moreover, since $\H_u$ is reducing for $T$, it must be invariant for $U^*$ because
\[
T^*|_{\H_u}=(M_z^{(\kappa)}\oplus U)^*|_{\H_u}=U^*|_{\H_u}.
\]
Since the row contraction $(U_1^*,\ldots,U_d^*)$ restricts to a spherical unitary on $\H_u$, this subspace must be co-invariant for $U^*$, so in fact $\H_u$ is reducing for $M_z^{(\kappa)}\oplus U$. As $\H_0$ is the largest reducing subspace $M_z^{(\kappa)}\oplus U$ contained in $\H\cap\K_2$, we deduce that $\H_u\subset \H_0$. Therefore $\H_u = \H_0$ as claimed.
\end{proof}

A commuting row contraction $T=(T_1,\ldots,T_d)$ acting on $\H$ is said to be \emph{pure} if
\[
\lim_{N\to \infty}\sum_{|\alpha|=N} \frac{N!}{\alpha_1! \alpha_2!\cdots \alpha_d!}\|T^{\alpha*}\xi\|^2=0 \qforal \xi\in \H .
\]
We noted before the proof of Theorem \ref{T:Hudilation} that $M_z^{(\kappa)}$ is pure for any cardinal number $\kappa$. In particular, a general commuting row contraction $T$ is pure if and only if its minimal co-extension has no spherical unitary part. Since $M_z$ is trivially absolutely continuous, all pure row contractions are absolutely continuous: this is the easy part of Theorem \ref{T:abscontdilation}. To guarantee absolute continuity, purity can actually be weakened to the commuting row contraction merely being completely non-unitary. This was also observed in Corollary 1.7 of \cite{eschmeier97} under stronger conditions.

\begin{theorem}\label{T:abscontCNU}
Let $T$ be a completely non-unitary commuting row contraction. Then, $T$ is absolutely continuous.
\end{theorem}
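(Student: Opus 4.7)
The plan is to deduce this theorem directly from Theorem \ref{T:AS} together with the definition of complete non-unitarity. By Theorem \ref{T:AS}, any commuting row contraction $T$ on $\H$ admits an orthogonal decomposition $T = A \oplus S$ acting on $\H = \H_A \oplus \H_S$, where $A$ is absolutely continuous and $S$ is a (totally singular) spherical unitary. Crucially, because this is an \emph{orthogonal} direct sum decomposition of $T$, the summand $\H_S$ is a reducing subspace for $T$, and the restriction of $T$ to $\H_S$ is precisely $S$, which is a spherical unitary.

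Now invoke the completely non-unitary hypothesis. By definition, $T$ has no (invariant, in particular no reducing) subspace on which it restricts to a spherical unitary, other than $\{0\}$. Hence $\H_S = \{0\}$, so $T = A$, which is absolutely continuous.

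There is essentially no obstacle here beyond unpacking definitions, since the substantial analytic content has already been absorbed into Theorem \ref{T:AS} (whose proof uses the structure of $\A_d^{**} \cong \M_d \oplus \fW$ from Theorem \ref{T:dual} and the fact that the projection onto $\fW$ yields a spherical unitary, Theorem \ref{T:Wunitary}). One small point worth verifying explicitly in the writeup is that the decomposition produced by Theorem \ref{T:AS} is indeed orthogonal and that both $A$ and $S$ act on reducing subspaces of $T$: this is immediate from the construction in Lemma \ref{L:decomprep}, where the projections $Q_k = \widehat{\rho}(E_k)$ are self-adjoint, satisfy $Q_1 + Q_2 = I$, and commute with $\widehat{\rho}(\A_d^{**})$, hence in particular with each $T_k$. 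With that observation in hand, the proof reduces to the single line indicated above.
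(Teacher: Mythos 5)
Your argument is correct and is essentially identical to the paper's own proof: invoke Theorem \ref{T:AS} to write $T = A \oplus S$ with $A$ absolutely continuous and $S$ a spherical unitary, then note that complete non-unitarity forces the summand $S$ (which acts on a reducing, hence invariant, subspace) to be absent. Your extra remarks verifying that the decomposition is orthogonal via the projections $Q_k = \widehat{\rho}(E_k)$ are consistent with Lemma \ref{L:decomprep} and add nothing beyond what the paper's one-line proof already relies on.
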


\begin{proof}
By Theorem \ref{T:AS}, we can write $T=A\oplus S$ where $A$ is absolutely continuous and $S$ is a spherical unitary. By assumption, we see that this last summand must be absent.
\end{proof}

\section{Constrained  commuting row contractions}\label{S:constrained}

Let $T=(T_1,\ldots,T_d)$  be a commuting row contraction. As mentioned in Section \ref{S:prelim}, there is a unital completely contractive algebra homomorphism 
\[
\Phi_T:\A_d\to B(\H)
\]
with the property that $\Phi_T(z_k)=T_k$ for $1\leq k \leq d$.
Let $\J\subset \A_d$ be a closed ideal. Then, $T$ is said to be \emph{$\J$-constrained} if $f(T)=0$ for every $f\in \J$.
These objects were introduced and studied by Popescu \cite{Pop06}.
Our first goal in this section is to refine Theorem \ref{T:AS} for constrained commuting row contractions, using the additional information that the ideal $\J$ provides. The reader should note that this could be accomplished using Theorem 2.1 from \cite{Pop06}, but we provide a direct proof based on Theorem \ref{T:AS} and manage to extract more precise information.

Before proceeding with the result, let us establish some notation. Given a closed ideal $\J\subset \A_d$, we put
\[
V(\J) = \{z\in \ol{\bB_d}: f(z)=0 \qforal f\in \J\} ,
\]
\[
\J H^2_d=\{fh: f\in \J, h\in H^2_d\} \qand \N_{\J}=(\J H^2_d)^\perp.
\]
Note that the subspace $\N_\J\subset H^2_d$ is co-invariant for $M_z$. We also put
\[
Z_k=P_{\N_{\J}} M_{z_k}|_{\N_\J} \qfor  1\leq k \leq d.
\]
Then, $Z=(Z_1,\ldots,Z_d)$  is a $\J$-constrained commuting row contraction. For simplicity, we restrict our attention to separable Hilbert spaces.

\begin{thm} \label{T:constraineddilation}
Let $\J\subset \A_d$ be a closed ideal and  let $T = (T_1,\dots,T_d)$ be a $\J$-constrained commuting row contraction acting on some separable Hilbert space $\H$.
Then, we can write $T=A\oplus S$ where
\begin{enumerate}
\item[\rm{(i)}] 
 $S=(S_1,\dots,S_d)$  is a spherical unitary with an $\A_d$--totally singular spectral measure supported in $V(\J) \cap \bS_d$, and

\item[\rm{(ii)}] 
$A=(A_1,\dots,A_d)$ can be co-extended to $Z^{(\kappa)} \oplus W$ for some cardinal $\kappa$ and some spherical unitary $W=(W_1,\ldots,W_d)$ with an $\A_d$--Henkin spectral measure supported in $V(\J) \cap \bS_d$.
\end{enumerate}
\end{thm}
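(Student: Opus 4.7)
The plan is to apply Theorem~\ref{T:AS} to write $T = A \oplus S$ with $A$ absolutely continuous and $S$ a totally singular spherical unitary, and then refine each summand using the $\J$-constrained hypothesis. Denote the corresponding decomposition of $\H$ by $\H_A \oplus \H_S$. Since $f(T) = f(A) \oplus f(S) = 0$ for every $f \in \J$, both $A$ and $S$ inherit the $\J$-constrained property. For part (i), Corollary~\ref{C:spmeasuresing} already gives that the spectral measure $\mu_S$ of $S$ is $\A_d$-totally singular; to pin down its support, I would use that every $f \in \A_d$ extends continuously to $\ol{\bB_d}$ (the multiplier norm dominates the supremum norm on $\bB_d$), so $f(S) = 0$ forces $f$ to vanish pointwise on the closed set $\supp \mu_S$ via the continuous functional calculus for $S$. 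Intersecting over $f \in \J$ then yields $\supp \mu_S \subset V(\J) \cap \bS_d$.

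For part (ii), I would begin with a minimal co-extension $B = M_z^{(\kappa)} \oplus U$ of $A$ acting on $\K_1 \oplus \K_2$ supplied by Theorem~\ref{T:arvdilation}. Theorem~\ref{T:abscontdilation} combined with Lemma~\ref{L:spmeasure} ensures that $U$ is absolutely continuous with $\A_d$-Henkin spectral measure $\mu_U$. The key construction is the subspace
\[
\I \;=\; \bigcap_{f \in \J} \ker f(B)^* \;\subset\; \K_1 \oplus \K_2.
\]
A short calculation using $f(A)^* = f(B)^*|_{\H_A}$ and $f(A) = 0$ shows that $\H_A \subset \I$. Moreover, by commutativity of the functional calculus, for $\xi \in \I$ and $g \in \A_d$ one has $f(B)^* g(B)^* \xi = g(B)^* f(B)^* \xi = 0$ for every $f \in \J$, so $g(B)^* \xi \in \I$; thus $\I$ is itself co-invariant for $B$.

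Because $B$ splits as a direct sum, $\ker f(B)^* = \ker f(M_z^{(\kappa)})^* \oplus \ker f(U)^*$ for each $f$, and this passes to the intersection to give $\I = \I_1 \oplus \I_2$ with $\I_1 = \N_\J^{(\kappa)}$ and $\I_2 = \bigcap_{f \in \J} \ker f(U)^*$. Via the spectral picture of $U$ as multiplication on $L^2(\mu_U)$, the subspace $\I_2$ corresponds to functions supported on $V(\J) \cap \bS_d$ and is therefore a \emph{reducing} subspace for $U$. Hence $W := U|_{\I_2}$ is a spherical unitary with spectral measure $\mu_U|_{V(\J) \cap \bS_d}$, which is supported in $V(\J) \cap \bS_d$ and is $\A_d$-Henkin by Theorem~\ref{T:bands}. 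Compressing $B$ to the co-invariant subspace $\I$ now yields $Z^{(\kappa)} \oplus W$, and since $\H_A \subset \I$ remains co-invariant under this compression, this is the desired co-extension of $A$.

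The main obstacle will be verifying that $\I_2$ is actually reducing for $U$, and not merely co-invariant. This rests on exhibiting $\I_2$ as a spectral subspace via the spectral theorem for $U$, together with the observation that elements of $\A_d$ restrict continuously to $\bS_d$ (so $V(\J) \cap \bS_d$ is a genuine closed Borel set and the intersection of kernels can be read off the spectral decomposition). Once this is secured, the remainder of the argument is routine bookkeeping with the functional calculi on each direct summand.
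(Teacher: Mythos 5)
Your proposal is correct and follows essentially the same route as the paper: apply Theorem~\ref{T:AS}, take the minimal co-extension $M_z^{(\kappa)}\oplus U$ of the absolutely continuous part via Theorem~\ref{T:abscontdilation}, and observe that $\J$-constrainedness forces $\H_A\subset \bigcap_{f\in\J}\bigl(\ker f(M_z^{(\kappa)})^*\oplus\ker f(U)^*\bigr)=\N_\J^{(\kappa)}\oplus\bigcap_{f\in\J}\ker f(U)^*$, which yields the co-extension $Z^{(\kappa)}\oplus W$, while $f(S)=0$ pins the spectral support of $S$ inside $V(\J)\cap\bS_d$. The one point you flag as an obstacle is not one: each $\ker f(U)^*=\ker f(U)$ is the range of a spectral projection lying in the commutative von Neumann algebra $W^*(U_1,\dots,U_d)$, so the intersection is automatically reducing for $U$, and the Henkin property of the spectral measure of $W$ follows either from your appeal to Theorem~\ref{T:bands} or, as in the paper, from Lemma~\ref{L:spmeasure} applied to the restriction of the absolutely continuous $U$.
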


\begin{proof}
By Theorem \ref{T:AS} and the remarks following it, we can write $T=A\oplus S$ where $A=(A_1,\ldots,A_d)$ is an absolutely continuous commuting row contraction and $S=(S_1,\ldots,S_d)$ is a spherical unitary with $\A_d$--totally singular spectral measure. By virtue of Theorem \ref{T:abscontdilation}, we also have that the minimal co-extension of $A$ is of the form $M_z^{(\kappa)}\oplus U$ for some cardinal $\kappa$ and some spherical unitary $U$ with $\A_d$--Henkin spectral measure. Write $\H=\H_a\oplus \H_s$ where $A$ acts on $\H_a$ and $S$ acts on $\H_s$. For each $f \in \J$ we have
\begin{align*}
  0 &= f(T)^* = f(A)^*\oplus f(S)^*\\\
  &= ( f(M_z^{(\kappa)}) \oplus f(U) )^* |_{\H_a} \oplus f(S)^* 
\end{align*}
since $\H_a$ is co-invariant for $M_z^{(\kappa)}\oplus U$.
In particular, it follows that $f(S)=0$ for every $f\in \J$ and thus the support of the spectral measure of $S$ lies in $V(\J)\cap \bS_d$ by the spectral theorem.
In addition,  we see that 
\[
\H_a\subset \ker (M_f^{(\kappa)*} \oplus f(U)^* )
\]
for every $f\in \J$. In other words, we have
\[
\H_a\subset \bigcap_{f\in \J}\left(\ker  M_f^{(\kappa)*}\oplus \ker f(U)^*\right) =\N_\J^{(\kappa)}\oplus \U_\J
\]
where 
\[
\U_{\J}=\bigcap_{f\in \J}\ker f(U)^*
\]
which is reducing for $U$. In particular, we see that $W=U|_{\U_\J}$ is a spherical unitary with the property that $f(W)=0$ for every $f\in \J$. Since $W$ is a restriction of $U$ to a reducing subspace and $U$ is absolutely continuous, so is $W$ (in fact, by the minimality of the co-extension, $W=U$). Therefore, $W$ has an $\A_d$--Henkin spectral measure by Lemma \ref{L:spmeasure}, and that measure is supported on $V(\J)\cap \bS_d$. Finally, the fact that
\[
\H_a\subset \N_\J^{(\kappa)}\oplus \U_\J
\] 
implies that $A$ co-extends to $Z^{(\kappa)}\oplus W$ and the proof is complete.
\end{proof}

Given a closed ideal $\J\subset \A_d$, we denote by $\J_w\subset \M_d$ the closure of $\J$ in the weak-$*$ topology of $\M_d$. Clearly, $\J_w$ is an ideal of $\M_d$.
The following result yields information about the functional calculus $\Phi_T$ associated to a commuting row contraction $T$.

\begin{proposition} \label{P:constrainedfunctcalc}
Let $\J\subset \A_d$ be a closed ideal and let $T=(T_1,\ldots,T_d)$ be an absolutely continuous $\J$-constrained commuting row contraction acting on a separable Hilbert space $\H$. Then, there is a unital, weak-$*$ continuous, completely contractive algebra homomorphism
\[
\Psi_T:\M_d\to B(\H)
\]
which extends $\Phi_T$. Moreover, $\Psi_T(f)=0$ for every $f\in \J_w$ and $\Psi_T$ factors through $\M_d/ \J_w$. 
\end{proposition}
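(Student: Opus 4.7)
The first assertion is essentially built into the hypothesis. Since $T$ is absolutely continuous, $\Phi_T$ extends by definition to a weak-$*$ continuous map $\Psi_T:\M_d\to B(\H)$, and, as observed in the remark immediately following the definition of absolute continuity, such an extension is automatically a unital completely contractive algebra homomorphism. So there is nothing additional to check regarding the existence of $\Psi_T$ and its basic properties.

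For the vanishing on $\J_w$, I would exploit weak-$*$ continuity directly, rather than performing any sequential or net approximation. The singleton $\{0\}\subset B(\H)$ is closed in the weak-$*$ topology, so its preimage
\[
\ker\Psi_T \;=\; \Psi_T^{-1}(\{0\})
\]
is a weak-$*$ closed subset of $\M_d$. Because $T$ is $\J$-constrained, $\Psi_T(f)=\Phi_T(f)=f(T)=0$ for every $f\in\J$, hence $\J\subset\ker\Psi_T$. Taking weak-$*$ closures inside $\M_d$ yields
\[
\J_w \;\subset\; \ker\Psi_T,
\]
which is precisely the desired vanishing. This step sidesteps any delicacy about whether sequences suffice to approximate elements of $\J_w$ from $\J$.

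The factorization is then a formality. As noted just before the proposition, $\J_w$ is a weak-$*$ closed two-sided ideal of $\M_d$, so the quotient $\M_d/\J_w$ carries a canonical dual operator algebra structure and the quotient map $q:\M_d\to\M_d/\J_w$ is unital, completely contractive, and weak-$*$ continuous. Since $\Psi_T$ is a unital, weak-$*$ continuous, completely contractive homomorphism that annihilates $\ker q = \J_w$, it descends uniquely through $q$ to a map $\overline{\Psi}_T:\M_d/\J_w\to B(\H)$ inheriting all of the same properties. This is the promised factorization.

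Overall the argument is essentially formal once the correct objects are in hand; the substantive content has already been absorbed into the definition of absolute continuity (which delivers $\Psi_T$) and into the prior observation that $\J_w$ is a weak-$*$ closed ideal of $\M_d$. I do not anticipate any serious obstacle; the only subtlety worth flagging is to work always with the ambient weak-$*$ topologies on $\M_d$ and $B(\H)$ and to avoid the red herring of trying to reach $\J_w$ via bounded sequences in $\J$.
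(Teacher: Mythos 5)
Your proposal is correct and follows the same route as the paper, which simply notes that existence of $\Psi_T$ is the very definition of absolute continuity and that the rest is clear; your kernel argument (the kernel of a weak-$*$ continuous map is weak-$*$ closed and contains $\J$, hence contains $\J_w$) is exactly the routine verification the paper leaves implicit. No issues.
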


\begin{proof}
The existence of $\Psi$ follows from the very definition of absolute continuity of $T$. The second statement is clear. 
\end{proof}

Next, we obtain a refinement of Theorem \ref{T:abscontdilation} which could be proved along the same lines, but we give a short proof based on Theorem \ref{T:constraineddilation}.

\begin{corollary}\label{C:abscontchar}
Let $\J\subset \A_d$ be a closed ideal and let $T=(T_1,\ldots,T_d)$ be a $\J$-constrained commuting row contraction  acting on a separable Hilbert space $\H$. The following statements are equivalent.
\begin{enumerate}
\item[\rm{(i)}] $T$ is absolutely continuous

\item[\rm{(ii)}] $T$ co-extends to $Z^{(\kappa)} \oplus W$ for some cardinal $\kappa$ and some spherical unitary
$W$ with $\A_d$--Henkin spectral measure supported in $V(\J)\cap \bS_d$.
\end{enumerate}
\end{corollary}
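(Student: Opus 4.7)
The plan is to deduce both implications directly from Theorem \ref{T:constraineddilation} together with Lemma \ref{L:spmeasure} and the Henkin/totally singular band decomposition.

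For the implication (ii) $\Rightarrow$ (i), I would simply assemble the already-established facts. The compressed shift $Z$ is absolutely continuous because it is the restriction of the trivially absolutely continuous $d$-shift $M_z$ to the co-invariant subspace $\N_\J$, and restriction to a co-invariant subspace preserves absolute continuity (as exploited in the proof of Theorem \ref{T:abscontdilation}). Thus $Z^{(\kappa)}$ is absolutely continuous. The spherical unitary $W$ is absolutely continuous by Lemma \ref{L:spmeasure}, since its spectral measure is $\A_d$-Henkin. The direct sum $Z^{(\kappa)} \oplus W$ is therefore absolutely continuous, and hence so is its compression $T$ to the co-invariant subspace $\H$.

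For the implication (i) $\Rightarrow$ (ii), I would apply Theorem \ref{T:constraineddilation} to decompose $T = A \oplus S$, where $A$ already co-extends in the desired form $Z^{(\kappa)} \oplus W$ and $S$ is a spherical unitary whose spectral measure is $\A_d$-totally singular and supported in $V(\J) \cap \bS_d$. The goal is to argue that the spherical unitary summand $S$ must in fact act on the zero space. Since $T$ is absolutely continuous and $S$ is an orthogonal direct summand of $T$, the functional calculus $\Phi_S$ inherits the weak-$*$ continuous extension to $\M_d$ (apply $\Phi_T$'s extension to $\M_d$ and then compose with the weak-$*$ continuous compression to the component of $S$), so $S$ is absolutely continuous as well. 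By Lemma \ref{L:spmeasure} applied to the spherical unitary $S$, this forces its scalar spectral measure to be $\A_d$-Henkin.

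At this point the scalar spectral measure of $S$ is simultaneously $\A_d$-Henkin and $\A_d$-totally singular. By the uniqueness of the band decomposition in Theorem \ref{T:bands} (equivalently, by the observation recorded after Theorem \ref{T:measures} that the only functional that is both $\A_d$-Henkin and $\A_d$-totally singular is the zero functional), this measure must vanish, so $S$ acts on the trivial space and $T = A$ co-extends as in (ii). The only step that requires a moment's thought is the inheritance of absolute continuity by a direct summand, but this is immediate from the definition, since one can compose the weak-$*$ continuous extension $\Psi_T : \M_d \to B(\H)$ with the weak-$*$ continuous compression to the summand on which $S$ acts. No step constitutes a serious obstacle.
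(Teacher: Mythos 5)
Your proposal is correct and follows essentially the same route as the paper: the implication (ii)~$\Rightarrow$~(i) is the straightforward consequence of Lemma~\ref{L:spmeasure} (and the stability of absolute continuity under compression to co-invariant subspaces), while (i)~$\Rightarrow$~(ii) invokes Theorem~\ref{T:constraineddilation}, notes that the totally singular spherical unitary summand $S$ inherits absolute continuity from $T$, and concludes via Lemma~\ref{L:spmeasure} and the band decomposition that its spectral measure vanishes. Your write-up merely spells out details (e.g.\ the compression argument for why $S$ is absolutely continuous) that the paper leaves implicit.
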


\begin{proof}
The fact that (ii) implies (i) is a straightforward consequence of Lemma \ref{L:spmeasure}.
Assume thus that (i) holds. Invoking Theorem \ref{T:constraineddilation}, we see that $T=A\oplus S$  where $S$ is a spherical unitary with $\A_d$--totally singular spectral measure. On the other hand, $S$ must be absolutely continuous since $T$ is, and thus its spectral measure must be $\A_d$--Henkin by Lemma \ref{L:spmeasure}. Hence, that measure is zero and $S$ is absent.
\end{proof}

In general, absolute continuity does not imply purity: the usual bilateral shift on $L^2(\bS_1)$ provides an example of an absolutely continuous unitary operator. On the other hand, this statement is valid for certain constrained absolutely continuous contractions (see Lemma II.1.12 in \cite{bercovici}). We prove a multivariate analogue of that fact. Roughly speaking, purity is automatic for constrained absolutely continuous commuting row contractions if $V(\J)\cap \bS_d$ is small enough. 

Recall that a closed subset $K\subset \bS_d$ is said to be \emph{$\A_d$--totally null} if $|\eta|(K)=0$ for every regular Borel measure $\eta$ on $\bS_d$ which induces an $\A_d$--Henkin functional via integration.

\begin{theorem}\label{T:constrainedpure}
Let $\J\subset \A_d$ be a closed ideal with the property that $V(\J)\cap \bS_d$ is an $\A_d$--totally null set.
If $T = (T_1,\dots,T_d)$ is an absolutely continuous $\J$-constrained commuting row contraction on a separable Hilbert space,  then $T$ is pure.
\end{theorem}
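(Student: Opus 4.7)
The plan is to invoke Corollary~\ref{C:abscontchar} to obtain a structured co-extension of $T$, then argue that the spherical unitary summand is forced to be trivial by the totally null hypothesis, and finally read off purity from purity of the $d$-shift. Since $T$ is an absolutely continuous $\J$-constrained row contraction on a separable space, the corollary furnishes a co-extension of the form $Z^{(\kappa)}\oplus W$, where $W$ is a spherical unitary on some Hilbert space $\U$ whose scalar spectral measure $\mu$ is $\A_d$--Henkin and satisfies $\supp\mu\subset V(\J)\cap\bS_d$.

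Next I would eliminate $W$. By hypothesis $V(\J)\cap\bS_d$ is $\A_d$--totally null, so applying the definition of this notion to $\eta=\mu$ yields $|\mu|(V(\J)\cap\bS_d)=0$. Combined with the support condition $\supp\mu\subset V(\J)\cap\bS_d$, this forces $\mu=0$, hence $\U=\{0\}$ and the summand $W$ disappears. Consequently $T$ co-extends to $Z^{(\kappa)}$ alone.

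It then remains to read off purity. Because $\J H^2_d$ is invariant for $M_z$, the subspace $\N_\J$ is co-invariant for $M_z$, which gives $Z_k^*=M_{z_k}^*|_{\N_\J}$ and hence $\|Z^{\alpha*}\xi\|=\|M_z^{\alpha*}\xi\|$ for every $\xi\in\N_\J$ and every multi-index $\alpha$. Since the $d$-shift is pure (as recalled just before Theorem~\ref{T:Hudilation}), the sum defining purity vanishes on $\N_\J$ for $Z$, and hence also for $Z^{(\kappa)}$; transferring through the co-extension identity $\|T^{\alpha*}\xi\|=\|(Z^{(\kappa)})^{\alpha*}\xi\|$ valid for $\xi\in\H$ then yields the purity of $T$. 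I do not expect any substantive obstacle; the only bookkeeping point is that Lemma~\ref{L:spmeasure} ensures the scalar spectral measure of $W$ is of exactly the type to which the $\A_d$--totally null definition applies, so the elimination of $W$ is unambiguous.
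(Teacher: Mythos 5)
Your proposal is correct and follows essentially the same route as the paper: invoke Corollary~\ref{C:abscontchar} to co-extend $T$ to $Z^{(\kappa)}\oplus W$, use the totally null hypothesis to kill the $\A_d$--Henkin spectral measure of $W$, and conclude purity from the co-extension to $Z^{(\kappa)}$. Your extra details on why $Z$ (hence $Z^{(\kappa)}$, hence $T$) is pure are exactly what the paper delegates to its remark preceding the theorem, so there is no substantive difference.
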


\begin{proof}
By Corollary \ref{C:abscontchar}, we have that $T$ has a co-extension of the form 
$Z^{(\kappa)} \oplus W$ for some cardinal $\kappa$ and a spherical unitary
$W$ with an $\A_d$--Henkin spectral measure $\eta$ supported on $V(\J)\cap \bS_d$. Since this set is assumed to be $\A_d$--totally null, we see that  $\eta=0$ and conclude that $W=0$, and thus $T$ is pure.
\end{proof}

Recall that purity is equivalent to the fact that the minimal co-extension is given merely by a multiple of the $d$-shift $M_z$. It is interesting that the previous result encodes this dilation theoretic rigidity in the size of the common zero set of the ideal $\J$ on the sphere. 

We close the paper with some further remarks concerning Theorem \ref{T:constrainedpure} and the condition of $V(\J)\cap\bS_d$ being totally null. 

First, let us exhibit a class of examples of ideals $\J\subset \A_d$ with that property. Let $K\subset \bS_d$ be a closed $\A_d$--totally null set. By \cite[Theorem 9.5]{CD}, there exists $\phi\in \A_d$ such that $\phi|_K=1$ and $|\phi(\zeta)|<1$ for $\zeta\in \bS_d\setminus K$. Then, we see that $\phi(z)=1$ for $z\in \ol{\bB_d}$ if and only if $z\in K$, so that if we put $\J=\ol{(1-\phi)\A_d}$, then we have $V(\J)=K$. Unfortunately, the closure $\J_w$ of such an ideal tends to be too big for the conclusion of Theorem \ref{T:constrainedpure} to be valuable  (see \cite[Corollary 3.5]{ideals}). 

In order to construct ideals for which Theorem \ref{T:constrainedpure} applies in an interesting way, one can proceed as follows. Let $K\subset \bS_d$ be a closed $\A_d$--totally null subset and let $\Lambda\subset \bB_d$ be an \emph{interpolating sequence} for $\M_d$ with the property that $\ol{\Lambda}\cap \bS_d\subset K$. Such sequences always exist, see \cite[Proposition 9.1]{DHS15}. By \cite[Corollary 5.11]{ideals}, there is a closed ideal $\J\subset \A_d$ such that $V(\J)=\Lambda\cup K$. It is easily verified then that $\J_w$ consists of functions in $\M_d$ vanishing on $\Lambda$ and thus is a proper ideal.

In the single operator case, we note that no condition other than non-triviality needs to be imposed on the ideal $\J\subset \A_1=A(\bD)$ annihilating $T$ to guarantee purity. Indeed, in this case  it is known that $V(\J)\cap \bT$ has Lebesgue measure $0$ (see \cite{Hoffman}, \cite{Rud57}), and thus that set must be $\A_1$-totally null (see \cite[Chapter 10]{Rudin}).

Finally, we provide an example that illustrates that in contrast with the single operator case, some condition must be imposed on the annihilating ideal $\J$ 
in order for Theorem \ref{T:constrainedpure} to hold for $d\geq 2$. Let $U$ be the unitary operator of multiplication by the variable on the space $L^2(\bS_1)$. 
Consider the commuting row contraction $T=(U,0)$ which is clearly absolutely continuous. 
It is obvious that $T$ is not pure since $U$ is not. On the other hand, note that a function $f\in \A_2$ annihilates $T$ 
if and only if $f \in \J=\ol{z_2\A_d}$. The issue here is that
$$
V(\J)\cap \bS_2=\{(\zeta,0):\zeta\in \bS_1\}
$$
is not $\A_2$--totally null: it is a big circle supporting the one dimensional Lebesgue measure which is easily seen to be an $\A_2$--Henkin measure. This unfortunate phenomenon may be a reflection of the complexity of the structure of the closed ideals in $\A_d$, a study of which is undertaken in the upcoming paper \cite{ideals}.

\bibliographystyle{amsplain}

\end{document}